\documentclass[a4paper, 11pt]{article}

\usepackage[utf8]{inputenc}
\usepackage[OT1]{fontenc}
\usepackage{lmodern}     
\usepackage[english]{babel}
\usepackage{cite, hyphenat, hyperref}

\usepackage{amscd, amsfonts, amsmath}
\usepackage{amssymb, amsthm}
\usepackage{bbm, mathrsfs}
\usepackage{enumerate}
\usepackage{makeidx}
\usepackage{calc}
\usepackage[shortlabels]{enumitem}

\usepackage{nccfoots}
\usepackage{appendix}
\usepackage[a4paper, top=2.5cm, bottom=2.5cm, left=3cm, right=3cm]{geometry}
\allowdisplaybreaks

\numberwithin{equation}{section}
\theoremstyle{plain}

\newtheorem{Lemma}{Lemma}[section]
\newtheorem{Proposition}[Lemma]{Proposition}

\newtheorem{Corollary}[Lemma]{Corollary}

\theoremstyle{definition}
\newtheorem{Definition}[Lemma]{Definition}
\newtheorem{Example}[Lemma]{Example}

\newtheorem{Remark}[Lemma]{Remark}

\def\R{\mathbb{R}}
\def\N{\mathbb{N}}

\begin{document}
\title{\textbf{The oriented derivative}}
\author{Alexander Kalinin\footnote{Department of Mathematics, LMU Munich, Germany. {\tt alex.kalinin@mail.de}}}
\maketitle

\begin{abstract}
We show that the derivatives in the sense of Fr{\'e}chet and G{\^a}teaux can be viewed as derivatives oriented towards a star convex set with the origin as center. The resulting oriented differential calculus extends the mean value theorem, the chain rule and the Taylor formula in Banach spaces. Moreover, the oriented derivative decomposes additively along countably infinite orthogonal sums in Hilbert spaces.
\end{abstract}

\noindent
{\bf MSC2020 classification:} 46G05, 26B05, 26A24, 26A27.\\
{\bf Keywords:} derivative, gradient, chain rule, mean value theorem, Hessian, Taylor formula.

\section{A unifying differentiability notion}

The derivative of a function is a central object in calculus that carries intrinsic properties of the function itself, as the mean value theorem and the fundamental theorem of calculus entail. Indeed, the analysis of a function, defined on a domain of even infinite dimension, by means of its derivative is an essential part of many textbooks on calculus, such as~\cite{Die60, Wal76, AmaEsc05, AmaEsc08}, and it goes back as far as the classical works of Fr{\'e}chet~\cite{Fre11} and G{\^a}teaux~\cite{Gat13, Gat19}.

In this paper, we present a differential calculus that applies to the derivatives of Fr{\'e}chet and G{\^a}teaux and solely requires the function and its domain to be differentiable and open in the direction of the selected orientation, respectively. In particular, in a Hilbert space $X$ the oriented derivative can be identified with a gradient.

Namely, for a $0$-star convex set $S$ in $X$ let $U$ be a non-empty $S$-open set in $X$ in the sense that for any $x\in U$ there is $\delta > 0$ such that $x + h\in U$ for all $h\in S$ with $|h| < \delta$. Then $\varphi:U\rightarrow\R$ is \emph{$S$-differentiable} if there is a map $L:U\rightarrow X$ satisfying
\begin{equation}\label{eq:special oriented gradient}
\varphi(x + h) - \varphi(x) = \langle L(x),h\rangle + o(|h|)\quad\text{as $h\rightarrow 0$ on $S$}
\end{equation}
for any $x\in U$, where $\langle\cdot,\cdot\rangle$ is the inner product on $X$ that induces the complete norm $|\cdot|$. In this case, there exists a unique map $\nabla_{S}\varphi$ on $U$ taking all its values in the closure $V$ of the linear hull of $S$ such that
\begin{equation*}
\langle L,h\rangle = \langle\nabla_{S}\varphi,h\rangle\quad\text{for all $h\in V$.}
\end{equation*}
We shall call $\nabla_{S}\varphi$ the \emph{$S$-oriented gradient} of $\varphi$ and let $C_{S}^{1}(U)$ stand for the the linear space of all $S$-differentiable functions $\varphi\in C(U)$ for which $\nabla_{S}\varphi$ is continuous. Several relevant applications and properties of the oriented gradient are as follows.\smallskip

(i) While $\varphi:U\rightarrow\R$ is $X$-differentiable if and only if it is \emph{Fr{\'e}chet} differentiable, the introduced concept in~\eqref{eq:special oriented gradient} is redundant for $S = \{0\}$. Thus, $C_{X}^{1}(U) = C^{1}(U)$ and $C_{\{0\}}^{1}(U) = C(U)$ and we recover the \emph{gradient} and the zero operator,
\begin{equation*}
\nabla_{X} = \nabla \quad\text{and}\quad \nabla_{\{0\}} = 0.
\end{equation*}

(ii) If the linear hull of $S$ is dense in $X$, then $\nabla_{S}$ determines $\nabla$ in the following sense: Every differentiable extension $\tilde{\varphi}$ of $\varphi$ to an open set $\tilde{U}$ in $X$ with $U\subset\tilde{U}$ satisfies
\begin{equation}\label{eq:derivative of an extension}
\nabla\tilde{\varphi} = \nabla_{S}\varphi.
\end{equation}
In the case that $X = \R^{d}$ for some $d\in\N$ and $S$ is the half-space $\mathbb{H}^{d}$ of all $x\in\R^{d}$ with $x_{d}\geq 0$, this fact is used to introduce differentiable manifolds with boundary. For instance, see~\cite{Mun91, AmaEsc09}. In particular, for $d = 1$ the function $\varphi$ is $\R_{+}$-differentiable if and only if its \emph{right-hand derivative} $\varphi_{+}'(x)$ exists at any $x\in U$. In this case, $\nabla_{\R_{+}}\varphi = \varphi_{+}'$.

(iii) Let $S$ be balanced and its linear hull be dense in $X$. If $U$ is convex and $\varphi\in C_{S}^{1}(U)$, then, regardless of whether $\varphi$ is actually differentiable in the interior of $U$, the \emph{mean value theorem for the oriented derivative} in Corollary~\ref{co:mean value} yields that
\begin{equation*}
\varphi(x) - \varphi(y) = \int_{0}^{1}\langle\nabla_{S}\varphi((1-t)x + ty),x - y\rangle\,dt\quad\text{for any $x,y\in U^{\circ}$.}
\end{equation*}
For example, let $X$ be the separable Banach space $C([0,T],\R^{d})$ of all $\R^{d}$-valued continuous maps on $[0,T]$ for $T > 0$, endowed with the supremum norm. Then for $S$ we may take any linear space that contains all piecewise affine maps, such as the Cameron-Martin space $H^{1}([0,T],\R^{d})$ of all absolutely continuous maps with a square-integrable weak derivative that plays a major role in~\cite{ConKal20} in the context of stochastic and functional It{\^o} calculus.

(iv) The \emph{directional} or \emph{G{\^a}teaux derivative} of $\varphi$ at $x\in U$ \emph{in the positive direction} of $h\in S$, defined as $D_{h}^{+}\varphi(x) = \lim_{t\downarrow 0} (\varphi(x + th) - \varphi(x))/t$, exists if and only if $\varphi$ is differentiable at $x$ relative to the conic hull of $\{h\}$. In this case,
\begin{equation*}
D_{h}^{+}\varphi(x) = \langle\nabla_{\mathrm{coni}(h)}\varphi(x),h\rangle.
\end{equation*}
Hence, if $U$ is convex, then $h = y -x$ with $y\in U$ is possible. Further, positive homogeneity of $S$ and $S$-differentiability of $\varphi$ at $x$ imply that $D_{h}^{+}\varphi(x) = \langle\nabla_{S}\varphi(x),h\rangle$ for all $h\in S$.

(v) Let $S$ be balanced and for $n\in\N$ let $S_{1},\dots,S_{n}$ be pairwise orthogonal balanced sets in $X$ such that $S = S_{1}\oplus\cdots\oplus S_{n}$. Then $C_{S}^{1}(U)$ $= C_{S_{1}}^{1}(U)\cap\cdots\cap C_{S_{n}}^{1}(U)$ and we obtain the \emph{orthogonal decomposition}
\begin{equation}\label{eq:orthogonal decomposition of the gradient}
\nabla_{S}\varphi = \nabla_{S_{1}}\varphi + \cdots + \nabla_{S_{n}}\varphi\quad\text{for any $\varphi\in C_{S}^{1}(U)$,}
\end{equation}
as the explicit description of $C_{S}^{1}(U)$ in Corollary~\ref{co:decomposition of the derivative} shows. In particular, let $X = \R^{d}$, $d_{1},\dots,d_{n}\in\N$ satisfy $d = d_{1} + \cdots + d_{n}$ and $T_{i}$ be a balanced set in $\R^{d_{i}}$ for all $i\in\{1,\dots,n\}$ such that
\begin{equation}\label{eq:Cartesian product}
S = T_{1}\times\cdots\times T_{n}.
\end{equation}
We note that for $n=1$ this forces $S_{1} = T_{1}$. If instead $n\geq 2$, then for~\eqref{eq:Cartesian product} to hold it suffices to assume that $S_{1} = T_{1}\times\{0\}\times\cdots\times\{0\}$, $S_{n} = \{0\}\times\cdots\times\{0\}\times T_{n}$ and
\begin{equation*}
S_{i} = \{0\}\times\cdots\times\{0\}\times T_{i}\times\{0\}\times\cdots\times\{0\}\quad\text{for any $i\in\{2,\dots,n-1\}$.}
\end{equation*}
So, we write each $x\in\R^{d}$ in the form $x = (x_{1},\dots,x_{n})$ with $x_{1}\in\R^{d_{1}},\dots,x_{n}\in\R^{d_{n}}$ and let $\nabla_{T_{i}}$ denote the $i$-th coordinate of the differential operator $\nabla_{S_{i}}$ for all $i\in\{1,\dots,n\}$. Then~\eqref{eq:orthogonal decomposition of the gradient} turns into
\begin{equation*}
\nabla_{S}\varphi =
\begin{pmatrix}
\nabla_{T_{1}}\varphi\\
\vdots\\
\nabla_{T_{n}}\varphi
\end{pmatrix}
\quad\text{for all $\varphi\in C_{S}^{1}(U)$}
\end{equation*}
and in the particular case that $T_{i} = \R^{d_{i}}$ for $i\in\{1,\dots,n\}$ we recover the transpose of the \emph{partial derivative} with respect to the coordinate $x_{i}\in\R^{d_{i}}$. Namely,
\begin{equation*}
\nabla_{\R^{d_{i}}} = \nabla_{x_{i}} = D_{x_{i}}'.
\end{equation*}
Finally, in the specific scenario~\eqref{eq:Cartesian product} the oriented derivative $\nabla_{S}$ yields a unifying theory for classical and viscosity solutions to elliptic and parabolic equations, as the companion paper~\cite{Kal23} will show.\smallskip

To cover these applications and derive the emphasized results, we develop a \emph{differential calculus for the oriented derivative} from its classical counterpart in Section~\ref{se:2}. Based on a differentiability concept for Banach spaces, we deduce the chain rule, the mean value theorem and a characterisation via projections in Section~\ref{se:2.1}.

The orthogonal decomposition of the oriented derivative in Hilbert spaces and the definition of the oriented gradient, which yield the description~\eqref{eq:special oriented gradient} and the identity~\eqref{eq:orthogonal decomposition of the gradient}, are discussed in Section~\ref{se:2.2}.

Higher order derivatives with the Hessian as special case, Taylor's formula and relations between local extrema and first- and second-order oriented derivatives are considered in Section~\ref{se:2.3}. All the results are proven in Section~\ref{se:3}.

\section{Oriented differential calculus}\label{se:2}

In the sequel, let $X$ and $Y$ be Banach spaces, $|\cdot|$ stand for the complete norm on both $X$ and $Y$ and $S$ be a star-convex set in $X$ with center $0$. We write $V$ for the closure of the linear hull $\mathrm{span}(S)$ of $S$ and let $U$ be a non-empty set in $X$.

By $B_{r}(x)$ we denote the open ball around a point $x$ in one of the Banach spaces with radius $r\geq 0$, recall the conic hull $\mathrm{coni}(S)$ of $S$ and set $[x,y]:=\{(1 - t)x + t y\,|\,t\in [0,1]\}$ for all $x,y\in X$. Further, $\mathcal{L}(X,Y)$ is the Banach space of all $Y$-valued linear continuous operators on $X$, equipped with the operator norm that is also denoted by $|\cdot|$.

\subsection{Differentiability, the chain rule and the mean value theorem}\label{se:2.1}

We introduce and characterise the oriented derivative that includes the gradient in~\eqref{eq:special oriented gradient} as special case and allows for extended versions of the chain rule and the mean value theorem.

To this end, let us define the \emph{$S$-interior} of $U$ to be the set $U_{S}^{\circ}$ of all $x\in U$ such that $x + B_{\delta}(0)\cap S \subset U$ for some $\delta > 0$, which includes the standard interior of $U$. Accordingly, $U$ is called \emph{$S$-open} if it coincides with $U_{S}^{\circ}$.

\begin{Example}\label{ex:interior points}
Let $X = \R^{d}$ and $S=[0,R[^{d}$ (resp.~$S=]-R,0]^{d}$) for $d\in\N$ and $R\in ]0,\infty]$. Then $x\in U_{S}^{\circ}$ if and only if there is $\delta > 0$ such that
\begin{equation*}
[x_{1},x_{1} + \delta[\times\cdots\times [x_{d},x_{d} + \delta[\,\subset U\quad\text{(resp.~$]x_{1} - \delta,x_{1}]\times\cdots\times ]x_{d} - \delta,x_{d}]\subset U$).}
\end{equation*}
In particular, $\R_{+}^{d}$ is $[0,R[^{d}$-open and $]-\infty,0]^{d}$ is $]-R,0]^{d}$-open.
\end{Example}

We notice that a map $L:S\rightarrow Y$ satisfying $L(th) = t L(h)$ for all $t\in [0,1]$ and $h\in S$ and $L(h) = o(|h|)$ as $h\rightarrow 0$ must vanish, by the positive homogeneity of a norm. This follows follows readily from the identities
\begin{equation*}
\frac{L(h)}{|h|} = \lim_{n\uparrow\infty} \frac{L(\frac{h}{n})}{|\frac{h}{n}|} = 0\quad\text{for any $h\in S\setminus\{0\}$.}
\end{equation*}
If in fact $L$ is the restriction of some $\tilde{L}\in\mathcal{L}(V,Y)$ to $S$, then $\tilde{L} = 0$, as $V$ is the closure of $\mathrm{span}(S)$. This fact uniquely determines the derivative relative to $S$ declared as follows.

\begin{Definition}\label{de:differentiability}
A map $\varphi:U\rightarrow Y$ is said to be \emph{$S$-differentiable} at a point $x\in U_{S}^{\circ}$ if there is an operator $L\in\mathcal{L}(V,Y)$ such that
\begin{equation*}
\varphi(x + h) = \varphi(x) + L(h) + o(|h|)\quad\text{as $h\rightarrow 0$ on $S$.}
\end{equation*}
In this case, $L$ is called the \emph{$S$-oriented derivative} or simply the \emph{$S$-derivative} of $\varphi$ at $x$ and is denoted by $D_{S}\varphi(x)$.
\end{Definition}

\begin{Remark}\label{re:differentiability}
The $S$-derivative can be viewed as directional derivative in the positive direction of all the vectors in $S$, since $S = \{ t h \,|\,(t,h)\in [0,1]\times S\}$. In particular,
\begin{equation*}
\text{if}\quad \text{$\partial B_{r}(0)\subset S$}\quad\text{for some $r > 0$,}\quad\text{then}\quad U_{S}^{\circ} = U^{\circ}\quad\text{and}\quad D_{S} = D.
\end{equation*}
More generally, let $r\frac{h}{|h|}\in S$ for all $h\in S\setminus\{0\}$ and some $r > 0$, as in Example~\ref{ex:interior points}. Then the smallest positively homogeneous set $\{t h\,|\,(t,h)\in\R_{+}\times S\}$ that includes $S$ satisfies
\begin{equation*}
B_{r}(0)\cap S = B_{r}(0)\cap \{t h\,|\,(t,h)\in\R_{+}\times S\}
\end{equation*}
and yields the same interior and differentiability notions as $S$.
\end{Remark}

We readily observe that if $\varphi:U\rightarrow Y$ is $S$-differentiable at $x\in U_{S}^{\circ}$, then it must be \emph{$S$-continuous} there in the sense that
\begin{equation*}
\varphi(x + h) = \varphi(x) + o(1)\quad\text{as $h\rightarrow 0$ on $S$.}
\end{equation*}
Moreover, let $\tilde{S}$ be another $0$-star convex set in $X$ with $S\subset\tilde{S}$. Then $U_{\tilde{S}}^{\circ}\subset U_{S}^{\circ}$ and $\tilde{S}$-differentiability of $\varphi$ at $x\in U_{\tilde{S}}^{\circ}$ implies $S$-differentiability at this point and
\begin{equation*}
D_{\tilde{S}}\varphi(x) = D_{S}\varphi(x)\quad\text{on $V$.}
\end{equation*}
In particular, if $\mathrm{span}(S)$ is dense in $X$ and $\varphi$ admits an extension $\tilde{\varphi}$ to an open set $\tilde{U}$ in $X$ with $U\subset\tilde{U}$ that is differentiable at $x\in U$, then $\varphi$ is $S$-differentiable there and
\begin{equation*}
D\tilde{\varphi}(x) = D_{S}\varphi(x).
\end{equation*}
Hence,~\eqref{eq:derivative of an extension} follows from the definition of the $S$-gradient in~\eqref{eq:oriented gradient}. Apart from this essential fact, another motivation for the $S$-derivative is the \emph{oriented directional derivative}.

\begin{Example}\label{ex:oriented directional derivative}
For $(x,h)\in U_{S}^{\circ}\times S$ there is $\delta > 0$ such that $x + th\in U_{S}^{\circ}$ for all $t\in [0,\delta[$. If the map
\begin{equation*}
[0,\delta[\rightarrow Y,\quad t\mapsto\varphi(x + th)
\end{equation*}
is differentiable at $0$, then its derivative there is the \emph{directional derivative} $D_{h}^{+}\varphi(x)$ of $\varphi$ at $x$ \emph{in the positive direction} of $h$. Namely,
\begin{equation*}
D_{h}^{+}\varphi(x) = \lim_{t\downarrow 0} \frac{\varphi(x + th) - \varphi(x)}{t}.
\end{equation*}
Note that $D_{h}^{+}\varphi(x)$ exists if and only if $\varphi$ is $\mathrm{coni}(h)$-differentiable at $x$. In this case, $D_{h}^{+}\varphi(x) = D_{\mathrm{coni}(h)}\varphi(x)(h)$. In particular, $S$-differentiability of $\varphi$ at $x$ yields that
\begin{equation*}
D_{h}^{+}\varphi(x) = D_{S}\varphi(x)(h)\quad\text{for all $h\in S$}
\end{equation*}
if $S$ is positively homogeneous. Hence, the assertions on the G{\^a}teaux derivative in the introduction are direct consequences of the definition of the $S$-gradient.
\end{Example}

A map $\varphi:U\rightarrow Y$ that is $S$-differentiable at each point of $U_{S}^{\circ}$ is \emph{$S$-differentiable} and by the \emph{$S$-derivative} of $\varphi$ we shall mean the map $D_{S}\varphi:U_{S}^{\circ}\rightarrow\mathcal{L}(V,Y)$, $x\mapsto D_{S}\varphi(x)$. Further, $\varphi$ is continuously $S$-differentiable if $D_{S}\varphi$ is $S$-continuous.

For simplicity, $C_{S}^{1}(U)$ denotes the linear space of all $S$-differentiable $\varphi\in C(U)$ for which $D_{S}\varphi$ is continuous. Thus, if $X$ is a Hilbert space and $U$ is $S$-open, then $C_{S}^{1}(U)$ agrees with the linear space in the introduction, by~\eqref{eq:oriented gradient}.

Next, under a value condition for a $0$-star convex set $T$ in $Y$, the \emph{chain rule} extends to the oriented derivative.

\begin{Lemma}\label{le:chain rule}
Let $W\subset Y$ and $Z$ be a Banach space. If $\varphi:U\rightarrow W$ is $S$-differentiable at $x\in U_{S}^{\circ}$, $\varphi(x)\in W_{T}^{\circ}$, $\psi:W\rightarrow Z$ is $T$-differentiable at $\varphi(x)$ and
\begin{equation}\label{eq:chain rule condition}
\varphi(x + h) - \varphi(x) \in T\quad\text{for all $h\in S$ sufficiently close to zero,}
\end{equation}
then $\psi\circ\varphi$ is $S$-differentiable at $x$ and
\begin{equation*}
D_{S}(\psi\circ \varphi)(x) = D_{T}\psi\big(\varphi(x)\big)\circ D_{S}\varphi(x).
\end{equation*}
\end{Lemma}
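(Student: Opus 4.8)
\section*{Proof sketch}

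The plan is to transfer the classical proof of the chain rule to the oriented setting, the two new points being that the composition $D_{T}\psi(\varphi(x))\circ D_{S}\varphi(x)$ must first be shown to be well-defined and that the hypothesis~\eqref{eq:chain rule condition} is exactly what lets the inner increment of $\varphi$ be fed into the $T$-expansion of $\psi$. Write $y:=\varphi(x)$, $A:=D_{S}\varphi(x)\in\mathcal{L}(V,Y)$ and $B:=D_{T}\psi(y)\in\mathcal{L}(V_{T},Z)$, where $V_{T}$ denotes the closure of $\mathrm{span}(T)$ in $Y$. Since $B$ is only defined on $V_{T}$, the composition $B\circ A$ makes sense only once we know that $A(V)\subset V_{T}$; establishing this is the first step.

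To see $A(V)\subset V_{T}$, fix $g\in S$. By~\eqref{eq:chain rule condition} there is $\delta>0$ with $\varphi(x+tg)-\varphi(x)\in T$ for all $t\in[0,\delta[$, so the difference quotient $(\varphi(x+tg)-\varphi(x))/t$ lies in $t^{-1}T\subset\mathrm{span}(T)$. Letting $t\downarrow 0$, the $S$-differentiability of $\varphi$ shows this quotient converges to $A(g)$, whence $A(g)\in V_{T}$ because $V_{T}$ is closed. By linearity $A(\mathrm{span}(S))\subset V_{T}$, and continuity of $A$ together with closedness of $V_{T}$ upgrades this to $A(V)\subset V_{T}$. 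Thus $B\circ A\in\mathcal{L}(V,Z)$ is well-defined.

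For the expansion, set $k(h):=\varphi(x+h)-\varphi(x)$ and write $k(h)=A(h)+\rho(h)$ with $\rho(h)=o(|h|)$ as $h\to 0$ on $S$. Both $k(h)\in T$ (for small $h$, by~\eqref{eq:chain rule condition}) and $A(h)\in V_{T}$ force $\rho(h)\in V_{T}$, so $B(\rho(h))$ is defined and $|B(\rho(h))|\le|B|\,|\rho(h)|=o(|h|)$. Since $\varphi$ is $S$-continuous at $x$, we have $k(h)\to 0$ on $T$, so the $T$-differentiability of $\psi$ at $y$ gives
\begin{equation*}
(\psi\circ\varphi)(x+h)-(\psi\circ\varphi)(x)=\psi(y+k(h))-\psi(y)=B(k(h))+r(h),
\end{equation*}
where $r(h)=o(|k(h)|)$ as $h\to 0$ on $S$. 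Expanding $B(k(h))=B(A(h))+B(\rho(h))=(B\circ A)(h)+o(|h|)$ by linearity and continuity of $B$ reduces the claim to the remainder estimate $r(h)=o(|h|)$.

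The remaining point, and the only genuinely quantitative one, is this remainder estimate. From $|k(h)|\le|A|\,|h|+|\rho(h)|\le(|A|+1)|h|=:C|h|$ for all sufficiently small $h$, together with $k(h)\to 0$, one converts $r(h)=o(|k(h)|)$ into $r(h)=o(|h|)$: given $\varepsilon>0$, choose the threshold in the definition of $o(|k|)$ at level $\varepsilon/C$, so that for $h$ small enough $|r(h)|\le(\varepsilon/C)|k(h)|\le\varepsilon|h|$. Combining the displays yields
\begin{equation*}
(\psi\circ\varphi)(x+h)-(\psi\circ\varphi)(x)=(B\circ A)(h)+o(|h|)\quad\text{as $h\to 0$ on $S$,}
\end{equation*}
which, since $B\circ A\in\mathcal{L}(V,Z)$, identifies $\psi\circ\varphi$ as $S$-differentiable at $x$ with $D_{S}(\psi\circ\varphi)(x)=B\circ A$; uniqueness of the derivative is automatic from the vanishing-of-homogeneous-$o$ observation preceding Definition~\ref{de:differentiability}. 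I expect the domain-matching step $A(V)\subset V_{T}$ to be the main obstacle, since it is the place where~\eqref{eq:chain rule condition} is used in an essential, non-obvious way, whereas the remainder estimate is routine.
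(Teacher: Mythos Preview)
Your proof is correct and follows essentially the same route as the paper: expand $\psi$ in its $T$-oriented Taylor approximation, substitute the $S$-oriented expansion of $\varphi$, and bound the composite remainder using $|k(h)|\le C|h|$. The paper's proof is more terse; it packages the remainder as $t(h)=D_{T}\psi(\varphi(x))r(h)+s(D_{S}\varphi(x)h+r(h))$ and simply asserts $t(h)=o(|h|)$.

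One point worth noting: your verification that $A(V)\subset V_{T}$ via directional limits is a genuine addition. The paper's proof silently splits $D_{T}\psi(\varphi(x))(\varphi(x+h)-\varphi(x))$ into $D_{T}\psi(\varphi(x))D_{S}\varphi(x)h$ and $D_{T}\psi(\varphi(x))r(h)$, which presupposes that each summand lies in the domain $V_{T}$ of $D_{T}\psi(\varphi(x))$. You are right that this needs an argument, and your difference-quotient computation (using that $S$ is $0$-star convex so $tg\in S$ for $t\in[0,1]$, and that $V_{T}$ is closed) supplies exactly what is missing. So your proof is not merely the same argument but a slightly more complete version of it.
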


\begin{Remark}\label{re:chain rule}
If $T$ is a linear space, or less restrictively $x - y \in T$ for all $x,y\in T$, and $\varphi$ takes all its values in $T$, then~\eqref{eq:chain rule condition} is redundant.
\end{Remark}

\begin{Example}\label{ex:product rule}
Lemma~\ref{le:chain rule} gives the \emph{product rule} for the $S$-derivative: If $\varphi,\psi:U\rightarrow\R$ are $S$-differentiable at $x\in U_{S}^{\circ}$, then so is $\varphi\cdot\psi$ and
\begin{equation*}
D_{S}(\varphi\cdot\psi)(x) = \psi(x)\cdot D_{S}\varphi(x) + \varphi(x)\cdot D_{S}\psi(x).
\end{equation*}
\end{Example}

\begin{Example}\label{ex:path}
Let $(x,h)\in U\times S$ satisfy $[x,x+h]\subset U_{S}^{\circ}$, ensuring that $\gamma_{x,h}:[0,1]\rightarrow x + S$, $t\mapsto x + th$ takes all its values in $U_{S}^{\circ}$. If $\varphi$ is $S$-differentiable, then Example~\ref{ex:interior points} and Lemma~\ref{le:chain rule} show that
\begin{equation*}
\varphi\circ\gamma_{x,h}:[0,1]\rightarrow Y,\quad t\mapsto\varphi(x + th)
\end{equation*}
is right-hand differentiable and $(\varphi\circ\gamma_{x,h})_{+}'(t) = D_{S}\varphi(x + th)(h)$ for all $t\in [0,1[$. If in addition $-S\subset S$, then $\varphi\circ\gamma_{x,h}$ is differentiable, as its left-hand derivative exists and
\begin{equation*}
(\varphi\circ\gamma_{x,h})_{-}'(t) = D_{S}\varphi(x + th)(h)
\end{equation*}
for any $t\in ]0,1]$, by another application of Lemma~\ref{le:chain rule}.
\end{Example}

The preceding example entails the \emph{mean value theorem} for the oriented derivative. To this end, we recall that $-S\subset S$ if and only if $S$ is balanced, that is, $tS\subset S$ for all $t\in [-1,1[$.

\begin{Corollary}\label{co:mean value}
Let $S$ be balanced, $\varphi:U\rightarrow Y$ be $S$-differentiable and $(x,h)\in U\times S$ be such that $[x,x+h]\subset U_{S}^{\circ}$. Then
\begin{equation*}
|\varphi(x + h) - \varphi(x)| \leq \sup_{t\in ]0,1[} |D_{S}\varphi(x + th)|\cdot |h|
\end{equation*}
and in the case $Y = \R$ for any $t\in ]0,1]$ there is $s\in ]0,t[$ such that $\varphi(x + th) - \varphi(x)$ $= D_{S}\varphi(x + sh)(h)$. Moreover, if $\varphi$ is continuously $S$-differentiable, then
\begin{equation}\label{eq:mean value}
\varphi(x + h) = \varphi(x) + \int_{0}^{1}D_{S}\varphi(x + t h)(h)\,dt.
\end{equation}
\end{Corollary}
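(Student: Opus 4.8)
The plan is to reduce all three assertions to one-variable calculus for the auxiliary curve $g:[0,1]\rightarrow Y$, $g(t):=\varphi(x+th)$, and then to invoke classical results on the compact interval $[0,1]$. Since $S$ is balanced we have $-S\subset S$, so the hypotheses of Example~\ref{ex:path} are met by $(x,h)$; that example shows that $g$ is differentiable on $]0,1[$, that the appropriate one-sided derivatives exist at the endpoints, and that in every case
\begin{equation*}
g'(t)=D_{S}\varphi(x+th)(h).
\end{equation*}
In particular $g$ is continuous on $[0,1]$, which is exactly what lets me apply the mean value inequality, the scalar mean value theorem and the fundamental theorem of calculus to it.

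First I would treat the norm inequality. Because $Y$ need not be one-dimensional, I cannot use an equality but only the vector-valued mean value inequality $|g(1)-g(0)|\leq\sup_{t\in]0,1[}|g'(t)|$. I would prove this by a Hahn--Banach reduction: choose a norming functional $\ell\in Y^{*}$ with $|\ell|\leq 1$ and $\ell(g(1)-g(0))=|g(1)-g(0)|$, observe that $\ell\circ g:[0,1]\rightarrow\R$ is differentiable with $(\ell\circ g)'=\ell\circ g'$, and apply the classical mean value theorem to it. Estimating $|g'(t)|=|D_{S}\varphi(x+th)(h)|\leq|D_{S}\varphi(x+th)|\,|h|$ by the operator norm and taking the supremum then yields the claim. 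For $Y=\R$ the classical mean value theorem applied to $g$ on $[0,t]$ directly produces a point $s\in]0,t[$ realising the asserted intermediate-value identity.

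Finally, for the integral representation I would invoke the fundamental theorem of calculus for continuous Banach-space-valued functions, which gives $g(1)-g(0)=\int_{0}^{1}g'(t)\,dt$ and hence~\eqref{eq:mean value}, once $g'$ is known to be continuous on $[0,1]$. The continuity of $g'$ is the step that needs the continuous $S$-differentiability hypothesis: $S$-continuity of $D_{S}\varphi$ together with the balancedness of $S$ (so that the increments $(t-t_{0})h$ lie in $S$ for $t$ on either side of $t_{0}$) gives continuity of $t\mapsto D_{S}\varphi(x+th)$, and evaluation at the fixed vector $h$ preserves it. I expect the main obstacles to be precisely these two genuinely vector-valued steps --- the Hahn--Banach passage for the mean value inequality and the upgrade from $S$-continuity to ordinary continuity of $t\mapsto g'(t)$ --- whereas the reduction to $g$ via Example~\ref{ex:path} disposes of all the differentiability bookkeeping.
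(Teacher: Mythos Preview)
Your proposal is correct and follows essentially the same route as the paper: both reduce to one-variable calculus for $g=\varphi\circ\gamma_{x,h}$ via Example~\ref{ex:path}, then invoke the standard mean value theorem, its vector-valued inequality form, and the Banach-space fundamental theorem of calculus. The only cosmetic difference is that the paper cites textbook references for the vector-valued mean value inequality and the fundamental theorem, whereas you sketch the Hahn--Banach norming-functional argument and the $S$-continuity check for $g'$ explicitly.
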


\begin{Remark}
Let $\varphi$ and $D_{S}\varphi$ be continuous on $U_{S}^{\circ}$ . Then~\eqref{eq:mean value} remains valid even if $h\in S$ fails but $h\in\overline{S}$ and one of the following two conditions hold:
\begin{enumerate}[(i)]
\item $x + B_{|h|}(0)\cap S\subset U_{S}^{\circ}$.

\item $U_{V}^{\circ}$ is convex and $[x, x + h]\subset U_{V}^{\circ}$.
\end{enumerate}
Indeed, in either case there is a sequence $(h_{n})_{n\in\N}$ in $S$ that converges to $h$ and satisfies $[x, x + h_{n}]\subset U_{S}^{\circ}$ for all $n\in\N$, which allows us to apply Corollary~\ref{co:mean value}.
\end{Remark}

Finally, we stress the fact that, under a topological condition, the oriented derivative may be viewed as \emph{projection derivative} in the following sense.

\begin{Proposition}\label{pr:projection derivative}
Let $p:X\rightarrow S$ satisfy $p(h) = h$  and $|p(x)| \leq |x|$ for all $h\in S$ and $x\in X$. Then $B_{\delta}(0)\cap S = p(B_{\delta}(0))$ for each $\delta > 0$ and
\begin{equation*}
U_{S}^{\circ} = \{x\in U\,|\,\exists \delta > 0:\, x + p(B_{\delta}(0))\subset U\}.
\end{equation*}
Moreover, a map $\varphi:U\rightarrow Y$ is $S$-differentiable at $x\in U_{S}^{\circ}$ if and only if there exists $L\in\mathcal{L}(V,Y)$, namely, $L = D_{S}\varphi(x)$, such that
\begin{equation*}
\varphi(x + p(h)) = \varphi(x) + L(p(h)) + o(|h|)\quad\text{as $h\rightarrow 0$.}
\end{equation*}
\end{Proposition}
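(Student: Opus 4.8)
The plan is to establish the set identity first, read off the description of $U_{S}^{\circ}$ from it, and then reduce the projection characterization to Definition~\ref{de:differentiability} by using the two defining properties of $p$ separately: the retraction property $p(h)=h$ on $S$ and the non-expansiveness $|p(x)|\leq|x|$ on $X$.

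For the identity $B_{\delta}(0)\cap S = p(B_{\delta}(0))$ I would argue by double inclusion. If $y\in B_{\delta}(0)\cap S$, then $p(y)=y$ since $p$ fixes $S$, so $y=p(y)\in p(B_{\delta}(0))$. Conversely, any $y\in p(B_{\delta}(0))$ is of the form $y=p(z)$ with $|z|<\delta$; here $y\in S$ because $p$ takes its values in $S$, and $|y|=|p(z)|\leq|z|<\delta$ places $y$ in $B_{\delta}(0)$. Substituting this identity into the defining condition $x + B_{\delta}(0)\cap S\subset U$ of the $S$-interior then yields the asserted description of $U_{S}^{\circ}$ directly.

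For the equivalence in the last assertion I would prove both implications by hand. Assuming first that $\varphi$ is $S$-differentiable at $x$ with $L:=D_{S}\varphi(x)$, the description of $U_{S}^{\circ}$ supplies $\delta_{0}>0$ with $x+p(B_{\delta_{0}}(0))\subset U$, so that $\varphi(x+p(h))$ is defined for $|h|<\delta_{0}$. Given $\varepsilon>0$, Definition~\ref{de:differentiability} furnishes $\delta\in\,]0,\delta_{0}]$ with $|\varphi(x+g)-\varphi(x)-L(g)|\leq\varepsilon|g|$ for every $g\in S$ satisfying $|g|<\delta$. For $h\in X$ with $|h|<\delta$ the vector $g:=p(h)\in S$ obeys $|g|=|p(h)|\leq|h|<\delta$, whence $|\varphi(x+p(h))-\varphi(x)-L(p(h))|\leq\varepsilon|p(h)|\leq\varepsilon|h|$, which is precisely the required $o(|h|)$ estimate.

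For the converse I would suppose the projection estimate holds for some $L\in\mathcal{L}(V,Y)$. Since $p(g)=g$ for every $g\in S$, the choice $h:=g$ turns this estimate into $|\varphi(x+g)-\varphi(x)-L(g)|\leq\varepsilon|g|$ for all sufficiently small $g\in S$, which is exactly the condition of Definition~\ref{de:differentiability}; hence $\varphi$ is $S$-differentiable at $x$, and the uniqueness of the $S$-derivative noted before that definition forces $L=D_{S}\varphi(x)$. I expect no real obstacle here, as each implication isolates one of the two properties of $p$; the only point needing attention is that $\varphi(x+p(h))$ be well defined, which the second assertion guarantees.
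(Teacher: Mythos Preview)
Your proof is correct and follows essentially the same approach as the paper: the set identity is shown by double inclusion using the retraction property for one direction and non-expansiveness together with $p(X)\subset S$ for the other, the description of $U_{S}^{\circ}$ is read off from that identity, and the equivalence is obtained by substituting $g=p(h)$ in the forward direction and restricting to $h=g\in S$ in the converse. The only cosmetic difference is that the paper phrases the little-$o$ estimates via remainder functions $r$ and $s=r\circ p$ rather than in explicit $\varepsilon$--$\delta$ form.
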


\begin{Remark}\label{re:projection}
Let $|\cdot|$ be induced by an inner product $\langle\cdot,\cdot\rangle$, which turns $X$ into a Hilbert space. Then any non-empty closed convex set $C$ in $X$ admits a unique map $p_{C}:X\rightarrow C$, called the projection onto $C$, such that
\[
\mathrm{dist}(x,C) = |x - p_{C}(x)|\quad\text{for all $x\in X$}.
\]
It satisfies $p_{C}(h) = h$ for all $h\in C$ and $|p_{C}(x) - p_{C}(y)|^{2} \leq \langle x-y,p_{C}(x) - p_{C}(y)\rangle$ for any $x,y\in X$. Thus, if $S$ is closed and convex, then the hypotheses on $p$ hold for $p_{S}$.
\end{Remark}

\begin{Example}
Let $X$ be a Hilbert space and $S$ be a closed linear space, that is, $S = V$. If $\varphi:U\rightarrow Y$ is independent of the coordinates of $x\in U$ in $V$ in the sense that
\begin{equation*}
\varphi(x) = \varphi(y)\quad\text{for all $x,y\in U$ with $p_{V^{\perp}}(x) = p_{V^{\perp}}(y)$,}
\end{equation*}
which is equivalent to $\varphi(x) = \varphi(\hat{x} + p_{V^{\perp}}(x))$ for all $x\in U$ and some, or all, $\hat{x}\in V$ with $\hat{x} + p_{V^{\perp}}(x)\in U$, then $\varphi$ is $V$-differentiable at any $x\in U_{V}^{\circ}$ and
\begin{equation*}
D_{V}\varphi(x) = 0.
\end{equation*}
Thereby, we recall that the orthogonal complement $V^{\perp}$ of $V$ is another closed linear space and its (orthogonal) projection $p_{V^{\perp}}$ is linear and idempotent.
\end{Example}

\subsection{Differentiability along orthogonal sums in Hilbert spaces}\label{se:2.2}

We decompose the oriented derivative along a countably infinite orthogonal sum in a Hilbert space and introduce the oriented gradient, which justifies the description~\eqref{eq:special oriented gradient} and the identity~\eqref{eq:orthogonal decomposition of the gradient}.

Thus, we assume here that the complete norm $|\cdot|$ on the Banach space $X$ is induced by an inner product $\langle\cdot,\cdot\rangle$ and $S$ is the direct sum $\bigoplus_{n\in\N} S_{n}$ of a sequence $(S_{n})_{n\in\N}$ of pairwise orthogonal $0$-star convex sets in $X$.

Then $S$ consists of all $h\in X$ for which there is a sequence $(h_{n})_{n\in\N}$ in $X$, which is necessarily unique, such that $h_{n}\in S_{n}$ for all $n\in\N$ and $(\sum_{i=1}^{n}h_{i})_{n\in\N}$ converges to $h$. We write $V_{n}$ for the closure of $\mathrm{span}(S_{n})$ for all $n\in\N$ and observe that $V  = \bigoplus_{n\in\N} V_{n}$.

Our arguments succeeding Remark~\ref{re:differentiability} show that $U_{S}^{\circ}\subset U_{S_{n}}^{\circ}$ and if $\varphi:U\rightarrow Y$ is $S$-differentiable at $x\in U_{S}^{\circ}$, then it is $S_{n}$-differentiable there and $D_{S}\varphi(x) = D_{S_{n}}\varphi(x)$ on $V_{n}$ for all $n\in\N$. Thus, 
\begin{equation}\label{eq:decomposition of the derivative}
D_{S}\varphi(x)(h) = \sum_{n=1}^{\infty} D_{S_{n}}\varphi(x)(p_{V_{n}}(h))\quad\text{for all $h\in V$,}
\end{equation}
where $p_{V_{n}}$ is the (orthogonal) projection onto $V_{n}$, as in Remark~\ref{re:projection}, and $p_{V_{n}}(h) = h_{n}$ for any $n\in\N$ and $h\in V$. Further, if $U$ is $S$-open, then $U_{S}^{\circ} = U_{S_{n}}^{\circ}$ for all $n\in\N$.

Now that we have considered these facts, we shall assume until the end of this section that $S_{n}$ is actually balanced for all $n\in\N$, which implies the same property for $S$. Then Corollary~\ref{co:mean value} entails an \emph{integral representation}.

\begin{Lemma}\label{le:continuous differentiability}
If $\varphi:U\rightarrow Y$ is continuously $S_{n}$-differentiable for each $n\in\N$, then
\begin{equation*}
\varphi(x + h_{1} + \cdots + h_{n}) - \varphi(x) = \sum_{i=1}^{n} \int_{0}^{1} D_{S_{i}}\varphi(x + h_{1} + \cdots + h_{i} - (1-t) h_{i})(h_{i})\,dt
\end{equation*}
for all $n\in\N$ and $(x,h)\in U\times S$ such that $x + B_{|h|}(0)\cap S\subset \bigcap_{i=1}^{n} U_{S_{i}}^{\circ}$.
\end{Lemma}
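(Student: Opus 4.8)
The plan is to prove the identity by induction on $n$, with the telescoping of the sum being the conceptual engine and Corollary~\ref{co:mean value} supplying each individual integral. First I would handle the base case $n = 1$: here the hypothesis reads $x + B_{|h|}(0)\cap S \subset U_{S_{1}}^{\circ}$, and I want to apply the integral form~\eqref{eq:mean value} of the mean value theorem to the map $\varphi$ along the segment from $x$ to $x + h_{1}$. Since $h_{1} \in S_{1}$ and $S_{1}$ is balanced, Corollary~\ref{co:mean value} (with $S_{1}$ in place of $S$) gives $\varphi(x + h_{1}) - \varphi(x) = \int_{0}^{1} D_{S_{1}}\varphi(x + t h_{1})(h_{1})\,dt$, provided $[x, x + h_{1}] \subset U_{S_{1}}^{\circ}$; the latter follows from the containment hypothesis because each point $x + t h_{1}$ with $t \in [0,1]$ lies in $x + B_{|h|}(0)\cap S$. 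Rewriting $x + t h_{1}$ as $x + h_{1} - (1 - t) h_{1}$ matches the asserted summand for $i = 1$.

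For the inductive step, I would apply the $n = 1$ argument to the last coordinate $h_{n}$ based at the shifted point $x' := x + h_{1} + \cdots + h_{n-1}$, and then invoke the induction hypothesis on the first $n-1$ coordinates. Concretely, I would write
\begin{align*}
\varphi(x + h_{1} + \cdots + h_{n}) - \varphi(x) &= \big(\varphi(x' + h_{n}) - \varphi(x')\big)\\
&\quad + \big(\varphi(x') - \varphi(x)\big),
\end{align*}
handle the first bracket via Corollary~\ref{co:mean value} applied to $\varphi$ along $[x', x' + h_{n}]$ using that $S_{n}$ is balanced and $h_{n} \in S_{n}$, and handle the second bracket by the induction hypothesis. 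Reindexing the resulting integral so that the base point reads $x + h_{1} + \cdots + h_{n} - (1-t) h_{n}$ (note $x' + t h_{n} = x + h_{1} + \cdots + h_{n} - (1-t)h_{n}$) yields exactly the $i = n$ term, and combining with the inductive sum over $i = 1, \dots, n-1$ completes the formula.

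The main obstacle, and the step I would spend the most care on, is verifying that the various segments actually lie in the right $S_{i}$-interiors so that Corollary~\ref{co:mean value} and the induction hypothesis apply. For the last-coordinate step I need $[x', x' + h_{n}] \subset U_{S_{n}}^{\circ}$; for the induction hypothesis I need $x + B_{|h'|}(0)\cap S \subset \bigcap_{i=1}^{n-1} U_{S_{i}}^{\circ}$ where $h' := h_{1} + \cdots + h_{n-1}$. Both must be extracted from the single hypothesis $x + B_{|h|}(0)\cap S \subset \bigcap_{i=1}^{n} U_{S_{i}}^{\circ}$. The key geometric facts are that each partial sum $h_{1} + \cdots + h_{k}$ lies in $S$ (since $S = \bigoplus_{n} S_{n}$ is closed under the relevant finite sums of its orthogonal components) and, because the $S_{i}$ are pairwise orthogonal, $|h_{1} + \cdots + h_{k}| \le |h|$ by the Pythagorean identity, so every point of the form $x + h_{1} + \cdots + h_{k} \pm (\text{a scalar multiple of }h_{n})$ encountered along these segments still lies in the ball $B_{|h|}(0)\cap S$ about $x$. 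I would check that each intermediate point is of the form $x + g$ with $g \in B_{|h|}(0)\cap S$, which places it in the required interior, and that $|h'| \le |h|$ so the smaller ball used in the induction hypothesis is contained in the original; once these containments are confirmed, the balancedness of each $S_{i}$ guarantees the two-sided differentiability needed for the integral mean value formula, and the proof closes.
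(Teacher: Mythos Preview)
Your proposal is correct and follows essentially the same approach as the paper. The paper writes the telescoping sum directly, setting $x_{j}:=x+\sum_{i=1}^{j}h_{i}$ so that $\varphi(x_{n})-\varphi(x)=\sum_{i=1}^{n}\big(\varphi(x_{i})-\varphi(x_{i-1})\big)$, applies Corollary~\ref{co:mean value} to each increment, and verifies the containment via $|x_{i-1}+th_{i}-x|\leq |h|$ using orthogonality; your induction on $n$ is just a repackaging of the same telescoping and the same geometric check.
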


\begin{Remark}\label{re:continuity}
If $U$ is $S$-open, then $\lim_{n\uparrow\infty}\varphi(x + h_{1} + \cdots + h_{n}) = \varphi(x + h)$ as soon as $\varphi$ is $S$-continuous at $x + h$.
\end{Remark}

In consequence, a \emph{characterisation of $S$-differentiability} in terms of differentiability along $(S_{n})_{n\in\N}$ follows.

\begin{Proposition}\label{pr:decomposition of the derivative}
Let $U$ be $S$-open. Then $\varphi:U\rightarrow Y$ is continuously $S$-differentiable if and only if it is continuously $S_{n}$-differentiable for any $n\in\N$ and the following holds:
\begin{enumerate}[(i)]
\item $(\sum_{i=1}^{n}D_{S_{i}}\varphi(x)\circ p_{V_{i}})_{n\in\N}$ converges pointwise and $\sum_{n=1}^{\infty}D_{S_{n}}\varphi(x)\circ p_{V_{n}}$ is continuous for each $x\in U$.

\item $\varphi$ and the map $U\rightarrow\mathcal{L}(V,Y)$, $x\mapsto\sum_{n=1}^{\infty}D_{S_{n}}\varphi(x)\circ p_{V_{n}}$ are $S$-continuous.

\item $\sum_{n=1}^{\infty}\int_{0}^{1}(D_{S_{n}}\varphi(x + h_{1} + \cdots + h_{n} - (1-t)h_{n}) - D_{S_{n}}\varphi(x))(h_{n})\,dt = o(|h|)$ as $h\rightarrow 0$ on $S$ for all $x\in U$.
\end{enumerate}
In this case, $D_{S}\varphi(x) = \sum_{n=1}^{\infty}D_{S_{n}}\varphi(x)\circ p_{V_{n}}$ for any $x\in U$.
\end{Proposition}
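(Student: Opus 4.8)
The plan is to reduce both directions of the biconditional to a single master identity and then to read condition~(iii) off as the defining $o(|h|)$ remainder of the $S$-derivative. Throughout I fix $x\in U$ and exploit that, since $U$ is $S$-open, $U_{S}^{\circ} = U = U_{S_{n}}^{\circ}$ for every $n\in\N$, so there is $\delta > 0$ with $x + B_{\delta}(0)\cap S\subset U$. Consequently, for $h\in S$ with $|h| < \delta$ the hypotheses of Lemma~\ref{le:continuous differentiability} are met for all $n$ as soon as $\varphi$ is continuously $S_{i}$-differentiable for each $i$, and this is precisely the regime in which I will work.

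First I would establish the identity that drives everything. Writing $h = \sum_{i\in\N}h_{i}$ with $h_{i}\in S_{i}$, balancedness of $S$ together with $0\in S_{i}$ makes each tail $-\sum_{i>n}h_{i}$ lie in $S$ and tend to $0$, so, provided $\varphi$ is $S$-continuous, Remark~\ref{re:continuity} lets me pass to the limit $n\uparrow\infty$ in Lemma~\ref{le:continuous differentiability} and obtain
\[
\varphi(x+h)-\varphi(x)=\sum_{i=1}^{\infty}\int_{0}^{1} D_{S_{i}}\varphi(x+h_{1}+\cdots+h_{i}-(1-t)h_{i})(h_{i})\,dt.
\]
Setting $L(x) := \sum_{n=1}^{\infty}D_{S_{n}}\varphi(x)\circ p_{V_{n}}$, which is a well-defined operator in $\mathcal{L}(V,Y)$ by~(i), and using $p_{V_{n}}(h) = h_{n}$ and $\int_{0}^{1}D_{S_{i}}\varphi(x)(h_{i})\,dt = D_{S_{i}}\varphi(x)(h_{i})$, I would subtract $L(x)(h) = \sum_{i}D_{S_{i}}\varphi(x)(h_{i})$ termwise from the previous display. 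Since both series converge, this is legitimate and yields
\[
\varphi(x+h)-\varphi(x)-L(x)(h)=\sum_{i=1}^{\infty}\int_{0}^{1}\big(D_{S_{i}}\varphi(x+h_{1}+\cdots+h_{i}-(1-t)h_{i})-D_{S_{i}}\varphi(x)\big)(h_{i})\,dt.
\]
By Definition~\ref{de:differentiability} the left-hand side is $o(|h|)$ exactly when $\varphi$ is $S$-differentiable at $x$ with $D_{S}\varphi(x)=L(x)$, while the right-hand side is $o(|h|)$ exactly when~(iii) holds at $x$; this equivalence is the entire content of the proposition.

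It then remains to run the two implications against this identity. For the forward direction I assume $\varphi$ continuously $S$-differentiable: continuous $S_{n}$-differentiability follows because $D_{S_{n}}\varphi(x)=D_{S}\varphi(x)|_{V_{n}}$ is the image of $D_{S}\varphi(x)$ under the norm-decreasing restriction map $\mathcal{L}(V,Y)\to\mathcal{L}(V_{n},Y)$, through which $S$-continuity transfers; condition~(i) and the formula $D_{S}\varphi(x)=L(x)$ come from~\eqref{eq:decomposition of the derivative}; and~(ii) holds since $S$-differentiability gives $S$-continuity of $\varphi$ whereas $L=D_{S}\varphi$ is $S$-continuous by hypothesis. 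With these in hand the master identity applies, its left-hand side is $o(|h|)$, and~(iii) drops out. Conversely, assuming continuous $S_{n}$-differentiability and (i)--(iii), condition~(ii) supplies the $S$-continuity of $\varphi$ needed for the identity, condition~(iii) forces its right-hand side to be $o(|h|)$, so $\varphi$ is $S$-differentiable at every $x\in U$ with $D_{S}\varphi(x)=L(x)$; since $D_{S}\varphi=L$ is $S$-continuous by~(ii), $\varphi$ is continuously $S$-differentiable.

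The step I expect to be the main obstacle is the construction of the master identity itself: justifying the passage from the finite integral representation of Lemma~\ref{le:continuous differentiability} to its infinite-series form — which requires $S$-openness, the tail-in-$S$ observation from balancedness, and the $S$-continuity of $\varphi$ to invoke Remark~\ref{re:continuity} — and then the termwise subtraction of $L(x)(h)$, which is valid only because~(i) guarantees convergence of the subtracted series. Once the identity is secured, both implications are essentially bookkeeping, since~(iii) has been designed to coincide with the remainder term.
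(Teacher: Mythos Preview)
Your proposal is correct and follows essentially the same route as the paper: both arguments rest on rewriting Lemma~\ref{le:continuous differentiability} as the finite identity
\[
\varphi(x + h_{1} + \cdots + h_{n}) = \varphi(x) + \sum_{i=1}^{n}D_{S_{i}}\varphi(x)(h_{i}) + \sum_{i=1}^{n}\int_{0}^{1}\big(D_{S_{i}}\varphi(\cdots) - D_{S_{i}}\varphi(x)\big)(h_{i})\,dt,
\]
invoking Remark~\ref{re:continuity} to pass to the limit on the left, and recognising~(iii) as the $o(|h|)$ remainder in Definition~\ref{de:differentiability}; the only cosmetic difference is that you assemble an infinite ``master identity'' first and then subtract $L(x)(h)$, whereas the paper keeps the subtraction at the finite level and reads off the biconditional directly from the limit.
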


\begin{Remark}\label{re:decomposition of the derivative}
From the triangle inequality we infer that condition~(iii) is implied by
\begin{equation*}
\limsup_{n\uparrow\infty}\int_{0}^{1}\bigg|\sum_{i=1}^{n}\big(D_{S_{i}}\varphi(x + h_{1} + \cdots + h_{i} - (1-t)h_{i}) - D_{S_{i}}\varphi(x)\big)\circ p_{V_{i}}\bigg|\,dt = o(1)
\end{equation*}
as $h\rightarrow 0$ on $S$ for each $x\in U$.
\end{Remark}

\begin{Example}
Let $n\in\N$ be such that $S_{m + n} = \{0\}$ for all $m\in\N$. Then $S = \bigoplus_{i=1}^{n} S_{i}$, and Proposition~\ref{pr:decomposition of the derivative} shows that $\varphi$ is continuously $S$-differentiable if and only if it is continuously $S_{i}$-differentiable for all $i\in\{1,\dots,n\}$ and the map
\begin{equation*}
U\rightarrow\mathcal{L}(V,Y),\quad x\mapsto\sum_{i=1}^{n}D_{S_{i}}\varphi(x)\circ p_{V_{i}}
\end{equation*}
is $S$-continuous. In this case, $D_{S}\varphi(x) = \sum_{i=1}^{n}D_{S_{i}}\varphi(x)\circ p_{V_{i}}$ for all $x\in U$. Indeed, as $D_{S_{m + n}}\varphi = 0$ for any $m\in\N$, the $S$-continuity of $\varphi$ and condition~(iii) follow in the if-direction from Lemma~\ref{le:continuous differentiability} and Remark~\ref{re:decomposition of the derivative}.
\end{Example}

If $\varphi:U\rightarrow\R$ is differentiable at $x\in U_{S}^{\circ}$, then Riesz representation theorem yields a unique element $\nabla_{S}\varphi(x)$ in $V$, which we call the \emph{$S$-oriented gradient} or \emph{$S$-gradient} of $\varphi$ at $x$, such that
\begin{equation}\label{eq:oriented gradient}
D_{S}\varphi(x)(h) = \langle\nabla_{S}\varphi(x),h\rangle\quad\text{for all $h\in V$}.
\end{equation}
So, $C_{S}^{1}(U)$ is indeed the linear space of all $S$-differentiable $\varphi\in C(U)$ for which $\nabla_{S}\varphi$ is continuous and Proposition~\ref{pr:decomposition of the derivative} gives an explicit relation to the linear space $\bigcap_{n\in\N} C_{S_{n}}^{1}(U)$.

\begin{Corollary}\label{co:decomposition of the derivative}
Let $U$ be $S$-open. Then $C_{S}^{1}(U)$ consists of all $\varphi\in\bigcap_{n\in\N} C_{S_{n}}^{1}(U)$ such that $\sum_{n=1}^{\infty}|\nabla_{S_{n}}\varphi(x)|^{2} < \infty$, locally uniformly in $x\in U$, and
\begin{equation}\label{eq:gradient decomposition condition}
\sum_{n=1}^{\infty} \int_{0}^{1}\langle\nabla_{S_{n}}\varphi(x + h_{1} + \dots + h_{n} - (1-t) h_{n}) - \nabla_{S_{n}}\varphi(x),h_{n}\rangle\,dt = o(|h|)
\end{equation}
as $h\rightarrow 0$ on $S$ for any $x\in U$. Moreover, each $\varphi\in C_{S}^{1}(U)$ satisfies $\nabla_{S}\varphi = \sum_{n=1}^{\infty}\nabla_{S_{n}}\varphi$, locally uniformly.
\end{Corollary}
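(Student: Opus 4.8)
The plan is to deduce the corollary from Proposition~\ref{pr:decomposition of the derivative} by translating every operator-theoretic hypothesis into a statement about gradients via the Riesz representation~\eqref{eq:oriented gradient}, and then to reconcile the merely pointwise hypotheses of that proposition with the locally uniform assertions demanded here. Throughout I would exploit that $V = \bigoplus_{n\in\N} V_{n}$ is an orthogonal sum, so that the truncations $P_{N} := \sum_{n=1}^{N} p_{V_{n}}$ are the orthogonal projections onto $V_{1}\oplus\cdots\oplus V_{N}$ and satisfy $|P_{N}|\leq 1$ together with $P_{N}v\to v$ for every $v\in V$.

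First I would set up the dictionary between the two formulations. For real-valued $\varphi$ and a fixed point $x$, since $\nabla_{S_{n}}\varphi(x)\in V_{n}$ and $p_{V_{n}}$ is the self-adjoint idempotent with range $V_{n}$, the bounded operator $D_{S_{n}}\varphi(x)\circ p_{V_{n}}$ is exactly the functional $\langle\nabla_{S_{n}}\varphi(x),\cdot\rangle$ on $V$, of norm $|\nabla_{S_{n}}\varphi(x)|$. Because each $h_{n}$ lies in $S_{n}\subset V_{n}$, condition~(iii) of Proposition~\ref{pr:decomposition of the derivative} is then verbatim~\eqref{eq:gradient decomposition condition}. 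Moreover, whenever $\varphi$ is $S$-differentiable the relation $D_{S}\varphi(x) = D_{S_{n}}\varphi(x)$ on $V_{n}$ recorded before Lemma~\ref{le:continuous differentiability} yields $\nabla_{S_{n}}\varphi(x) = p_{V_{n}}(\nabla_{S}\varphi(x))$.

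Next I would identify condition~(i) with pointwise square-summability. Since the vectors $\nabla_{S_{n}}\varphi(x)\in V_{n}$ are pairwise orthogonal, the partial sums obey $|\sum_{n=1}^{N}\nabla_{S_{n}}\varphi(x)|^{2} = \sum_{n=1}^{N}|\nabla_{S_{n}}\varphi(x)|^{2}$. Pointwise convergence of the functionals in~(i) therefore means weak convergence of these partial sums, which by the uniform boundedness principle forces $\sum_{n}|\nabla_{S_{n}}\varphi(x)|^{2}<\infty$; conversely, square-summability gives norm convergence to a vector $g(x):=\sum_{n}\nabla_{S_{n}}\varphi(x)\in V$ and hence continuity of the limit functional $\langle g(x),\cdot\rangle = \sum_{n}D_{S_{n}}\varphi(x)\circ p_{V_{n}}$. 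In either case the final identity of Proposition~\ref{pr:decomposition of the derivative} reads $\nabla_{S}\varphi(x) = g(x)$.

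It remains to upgrade the pointwise statements to the locally uniform ones, which I expect to be the only genuine obstacle. In the forward direction, if $\varphi\in C_{S}^{1}(U)$ then $\nabla_{S}\varphi$ is continuous, so $\nabla_{S_{n}}\varphi = p_{V_{n}}\circ\nabla_{S}\varphi$ is continuous and $\varphi\in C_{S_{n}}^{1}(U)$ for every $n$; Parseval gives $\sum_{n}|\nabla_{S_{n}}\varphi(x)|^{2} = |\nabla_{S}\varphi(x)|^{2}$, and on any compact $K\subset U$ the image $\nabla_{S}\varphi(K)$ is compact, so $|P_{N}|\leq 1$ together with $P_{N}v\to v$ forces $P_{N}\to\mathrm{id}$ uniformly on $\nabla_{S}\varphi(K)$; this delivers both the locally uniform square-summability and $\nabla_{S}\varphi = \sum_{n}\nabla_{S_{n}}\varphi$ uniformly on $K$. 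In the reverse direction, locally uniform square-summability makes $(\sum_{n=1}^{N}\nabla_{S_{n}}\varphi)_{N\in\N}$ uniformly Cauchy on compacta, whence its limit $g$ is sequentially, and therefore, continuous; thus $x\mapsto\langle g(x),\cdot\rangle$ is $S$-continuous and, together with $\varphi\in C(U)$ and~\eqref{eq:gradient decomposition condition}, conditions~(i)--(iii) of Proposition~\ref{pr:decomposition of the derivative} all hold, so that proposition returns $\varphi\in C_{S}^{1}(U)$ with $\nabla_{S}\varphi = g$. The technical heart of both directions is that pointwise convergence of the uniformly bounded truncations $P_{N}$ is automatically uniform on compact subsets of the Hilbert space $V$.
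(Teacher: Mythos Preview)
Your argument is correct and follows the paper's approach: both reduce to Proposition~\ref{pr:decomposition of the derivative} via the Riesz identification~\eqref{eq:oriented gradient}, match condition~(iii) with~\eqref{eq:gradient decomposition condition}, and use orthogonality of the $\nabla_{S_{n}}\varphi(x)$ to handle conditions~(i)--(ii) and the locally uniform convergence. The only noteworthy difference is in the forward local-uniform step: you argue via compactness of $\nabla_{S}\varphi(K)$ for compact $K\subset U$, whereas the paper obtains a \emph{neighborhood}-uniform Cauchy bound directly from continuity of $\nabla_{S}\varphi$ (split $(P_{N}-P_{M})\nabla_{S}\varphi(y)$ against $(P_{N}-P_{M})\nabla_{S}\varphi(x_{0})$ and use $|P_{N}-P_{M}|\leq 1$). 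In an infinite-dimensional $X$, neighborhoods are not precompact, so your compactness route yields the a~priori weaker ``uniform on compacta'' conclusion; the paper's splitting gives the stronger neighborhood-uniform version with no extra work, and this is what is needed to match the hypothesis used in the converse direction.
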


\begin{Example}\label{ex:decomposition of the derivative 2}
Let $S_{m + n} = \{0\}$ for every $m\in\N$ and some $n\in\N$. Then Remark~\ref{re:decomposition of the derivative} and the preceding corollary entail that $C_{S}^{1}(U)$ $= \bigcap_{i=1}^{n}C_{S_{i}}^{1}(U)$ and~\eqref{eq:orthogonal decomposition of the gradient} holds.
\end{Example}

\subsection{Higher order derivatives, Taylor's formula and local extrema}\label{se:2.3}

We introduce higher order oriented derivatives, generalise Taylor's formula and discuss necessary and sufficient conditions for local extrema based on the oriented derivative.

First, on the Banach space $X$ we characterise the oriented derivative via invertible linear continuous operators that allows for an analysis of the \emph{oriented Hessian}.

\begin{Lemma}\label{le:differentiability values of the gradient}
Let $S= O(R)$ for some $0$-star convex set $R$ in a Banach space $W$ and an invertible $O\in\mathcal{L}(W,X)$. Then a map $\varphi:U\rightarrow Y$ is $S$-differentiable at $x\in U_{S}^{\circ}$ if and only if
\begin{equation*}
\varphi_{O,x}:\{t\in R\,|\, x + O(t)\in U\}\rightarrow Y,\quad t\mapsto \varphi(x + O(t))
\end{equation*}
is $R$-differentiable at $0$. In this case, $D_{S}\varphi(x) = D_{R}\varphi_{O,x}(0)\circ O^{-1}$.
\end{Lemma}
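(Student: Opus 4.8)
The plan is to transport the differentiability statement for $\varphi$ at $x$ along the isomorphism $O$ to a differentiability statement for $\varphi_{O,x}$ at $0$, exploiting that $O$ maps $R$ bijectively onto $S$ and identifies the two remainder terms. First I would write $V_{W}$ for the closure of $\mathrm{span}(R)$ in $W$ and record that, since $O$ is an invertible element of $\mathcal{L}(W,X)$, it is a homeomorphism onto $X$ with inverse $O^{-1}\in\mathcal{L}(X,W)$. Hence $O(\mathrm{span}(R)) = \mathrm{span}(O(R)) = \mathrm{span}(S)$, and passing to closures (which a homeomorphism preserves) gives $O(V_{W}) = V$. Thus $O|_{V_{W}}:V_{W}\to V$ is a bounded linear bijection with bounded inverse $O^{-1}|_{V}:V\to V_{W}$, which is the structural fact underlying the whole argument.

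Next I would check that $0$ lies in the $R$-interior of the domain $D := \{t\in R\,|\,x + O(t)\in U\}$, so that $R$-differentiability of $\varphi_{O,x}$ at $0$ is even meaningful. Since $x\in U_{S}^{\circ}$, there is $\delta_{0} > 0$ with $x + B_{\delta_{0}}(0)\cap S\subset U$; for $k\in R$ with $|k| < \delta_{0}/|O|$ one has $O(k)\in S$ and $|O(k)|\leq |O|\,|k| < \delta_{0}$, so $x + O(k)\in U$ and $k\in D$. I would also note that $\varphi_{O,x}(0) = \varphi(x)$ because $O(0) = 0$.

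The core of the proof is the substitution $h = O(k)$, which sets up a bijection between $S\setminus\{0\}$ and $R\setminus\{0\}$ via $k = O^{-1}(h)$. From $|h|/|O|\leq |k|\leq |O^{-1}|\,|h|$ the norms $|h|$ and $|k|$ are comparable, so a quantity is $o(|h|)$ as $h\to 0$ on $S$ if and only if it is $o(|k|)$ as $k\to 0$ on $R$. Given an $S$-derivative $L = D_{S}\varphi(x)\in\mathcal{L}(V,Y)$, substituting $h = O(k)$ into $\varphi(x + h) = \varphi(x) + L(h) + o(|h|)$ produces $\varphi_{O,x}(k) = \varphi(x) + (L\circ O|_{V_{W}})(k) + o(|k|)$, exhibiting $\varphi_{O,x}$ as $R$-differentiable at $0$ with derivative $L\circ O|_{V_{W}}\in\mathcal{L}(V_{W},Y)$. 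Conversely, given $M = D_{R}\varphi_{O,x}(0)\in\mathcal{L}(V_{W},Y)$, substituting $k = O^{-1}(h)$ into $\varphi_{O,x}(k) = \varphi(x) + M(k) + o(|k|)$ yields $\varphi(x + h) = \varphi(x) + (M\circ O^{-1}|_{V})(h) + o(|h|)$, so $\varphi$ is $S$-differentiable at $x$ with $D_{S}\varphi(x) = M\circ O^{-1}|_{V} = D_{R}\varphi_{O,x}(0)\circ O^{-1}$, as claimed. The point demanding care is the interplay of the closures: the stated formula is to be read with $O^{-1}$ restricted to $V$, and its correctness rests precisely on $O|_{V_{W}}$ being an isomorphism onto $V$, which in turn requires the boundedness of $O^{-1}$ furnished by invertibility.
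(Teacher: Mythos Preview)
Your proof is correct and follows essentially the same approach as the paper: both arguments transport the differentiability via the substitution $h = O(k)$, using the comparability $|h|/|O|\le |k|\le |O^{-1}|\,|h|$ to match the $o$-terms, and then read off the derivative as the composite with $O$ or $O^{-1}$. Your version is in fact slightly more careful in two respects---you spell out that $O$ carries $V_{W}$ isomorphically onto $V$ (so that the composition $D_{R}\varphi_{O,x}(0)\circ O^{-1}|_{V}$ really lands in $\mathcal{L}(V,Y)$), and you explicitly verify that $0$ belongs to the $R$-interior of the domain of $\varphi_{O,x}$---points the paper handles only implicitly through its choice of radii $\varepsilon=(1+|O|)^{-1}\delta$ and $\delta=(1+|O^{-1}|)^{-1}\varepsilon$.
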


\begin{Remark}\label{re:differentiability values of the gradient}
Let $S=\tilde{O}(\tilde{R})$ for another $0$-star convex set $\tilde{R}$ in a Banach space $\tilde{W}$ and an invertible $\tilde{O}\in\mathcal{L}(\tilde{W},X)$. If $\varphi_{O,x}$ is $R$-differentiable at $0$, then
\begin{equation*}
\text{$\varphi_{\tilde{O},x}$ is $\tilde{R}$-differentiable at $0$}\quad\text{and}\quad  D_{R}\varphi_{O,x}(0)\circ O^{-1} = D_{\tilde{R}}\varphi_{\tilde{O},x}(0)\circ\tilde{O}^{-1}.
\end{equation*}
So, the representation of $D_{S}\varphi(x)$ is independent of the choice of the invertible operator $O\in\mathcal{L}(W,X)$. Moreover, if $X$ is a Hilbert space and $Y =\R$, then
\begin{equation*}
\nabla_{S}\varphi(x) = O(\nabla_{R}\varphi_{O,x}(0))
\end{equation*}
as soon as $\varphi$ is $S$-differentiable at $x$ and $O^{-1}$ is the adjoint of $O$.
\end{Remark}

\begin{Example}\label{ex:differentiability values of the gradient}
Let $X = \R^{d}$ for some $d\in\N$ and $S\neq \{0\}$. Then there are $k\in\{1,\dots,d\}$ and a matrix $O\in\R^{d\times k}$ whose columns form an orthonormal basis of $V$, which yields that $S = O(R)$ for the $0$-star convex set $R:= O'(S)$.
\end{Example}

\emph{Higher order derivatives} can be introduced, by using a well-known recursion principle. For any $n\in\N\setminus\{1\}$ we write $\mathcal{L}^{n}(X,Y)$ for the Banach space of all $Y$-valued multilinear continuous maps on $X^{n}$, equipped with the general operator norm given by
\begin{equation*}
|L|:=\inf\big\{c\geq 0\,|\,\forall x_{1},\dots,x_{n}\in X:\, |L(x_{1},\dots,x_{n})| \leq c |x_{1}|\cdots|x_{n}|\,\big\}.
\end{equation*}
We set $\mathcal{L}^{1}(X,Y):=\mathcal{L}(X,Y)$ and recall that $\psi_{X,n}:\mathcal{L}(X,\mathcal{L}^{n-1}(X,Y))\rightarrow\mathcal{L}^{n}(X,Y)$ defined by $\psi_{X,n}(L)(x_{1},\dots,x_{n})$ $:= L(x_{1})(x_{2},\dots,x_{n})$ is an isometric isomorphism.

Let $\varphi:U\rightarrow Y$ be a map for which $D_{S}^{n-1}\varphi:U_{S}^{\circ}\rightarrow\mathcal{L}^{n-1}(V,Y)$ is already defined. Then $\varphi$ is \emph{$n$-times $S$-differentiable} at $x\in U_{S}^{\circ}$ if $D_{S}^{n-1}\varphi$ is $S$-differentiable there, in which case
\begin{equation*}
D_{S}^{n}\varphi(x):=\psi_{V,n}(D_{S}(D_{S}^{n-1}\varphi)(x))
\end{equation*}
is the \emph{$n$-th order $S$-derivative} of $\varphi$ at $x$. If $\varphi$ is $n$-times $S$-differentiable and $D_{S}^{n}\varphi$ is $S$-continuous, then we refer to $n$-times continuous $S$-differentiability. This leads to an extended version of \emph{Schwarz's lemma}.

\begin{Lemma}\label{le:Schwarz}
Let $S$ be balanced and $\varphi:U\rightarrow Y$ be twice continuously $S$-differentiable. Then $D_{S}^{2}\varphi(x)$ is symmetric for each $x\in U_{S}^{\circ}$.
\end{Lemma}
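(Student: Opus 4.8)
The plan is to reduce the claim to the symmetry of an iterated integral of $D_S^2\varphi$ over a shrinking parallelogram, exploiting that a second-order finite difference is manifestly symmetric in its two increments. First I would reduce to proving $D_S^2\varphi(x)(u,v) = D_S^2\varphi(x)(v,u)$ for $u,v\in S$ only: the map $(u,v)\mapsto D_S^2\varphi(x)(u,v) - D_S^2\varphi(x)(v,u)$ is continuous and bilinear on $V\times V$, so if it vanishes on $S\times S$ it vanishes on $\mathrm{span}(S)\times\mathrm{span}(S)$ by bilinearity and hence on all of $V\times V = \overline{\mathrm{span}(S)}\times\overline{\mathrm{span}(S)}$ by continuity.

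Next I would fix $x$ and $u,v\in S$. Since $D_S^2\varphi(x)$ is defined, $D_S\varphi$ is $S$-differentiable at $x$, so $x$ lies in the $S$-interior of $U_S^\circ$; together with $0$-star-convexity this guarantees that, for all sufficiently small scalings, the four points $x,\,x+\epsilon u,\,x+\epsilon v,\,x+\epsilon u+\epsilon v$ lie in $U$ and the relevant segments lie in $U_S^\circ$. Because $S$ is balanced and $S$-differentiability entails $S$-continuity, $\varphi$ is continuously $S$-differentiable, and since $D_S^2\varphi$ is $S$-continuous, so is $D_S\varphi$. Hence Corollary~\ref{co:mean value}, via Example~\ref{ex:path}, applies to $\varphi$ along one leg and then to $D_S\varphi$ along the transverse leg. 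Carrying this out in both orders expresses the symmetric second difference
\[
\Delta(\epsilon) := \varphi(x+\epsilon u+\epsilon v) - \varphi(x+\epsilon u) - \varphi(x+\epsilon v)+\varphi(x)
\]
in the two forms
\[
\Delta(\epsilon) = \epsilon^2\!\int_0^1\!\!\int_0^1 D_S^2\varphi(x+s\epsilon u + r\epsilon v)(v,u)\,dr\,ds = \epsilon^2\!\int_0^1\!\!\int_0^1 D_S^2\varphi(x+s\epsilon u + r\epsilon v)(u,v)\,ds\,dr.
\]

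I would then divide by $\epsilon^2$ and let $\epsilon\downarrow 0$: if both double integrals converge to $D_S^2\varphi(x)(v,u)$ and $D_S^2\varphi(x)(u,v)$, their equality for every $\epsilon$ forces the desired symmetry. This passage to the limit is the main obstacle. The integrand is evaluated at $x+s\epsilon u+r\epsilon v$, which tends to $x$ through the two-parameter cone $\{su+rv : s,r\ge 0\}\subset S+S$, not through $S$, so the hypothesised $S$-continuity of $D_S^2\varphi$ does not apply directly. I would resolve this by splitting the increment into two successive $S$-increments,
\[
D_S^2\varphi(x+s\epsilon u + r\epsilon v) - D_S^2\varphi(x) = \big[D_S^2\varphi(x+s\epsilon u + r\epsilon v) - D_S^2\varphi(x+s\epsilon u)\big] + \big[D_S^2\varphi(x+s\epsilon u) - D_S^2\varphi(x)\big],
\]
where the second bracket vanishes as $\epsilon\downarrow 0$ by $S$-continuity at $x$ (the increment $s\epsilon u$ lies in $S$), while the first is an $S$-increment $r\epsilon v$ from the drifting base $x+s\epsilon u\to x$. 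The genuine analytic core is to show that this first bracket vanishes uniformly in $(s,r)$, i.e. to upgrade the pointwise $S$-continuity of $D_S^2\varphi$ to a local uniform $S$-continuity near $x$; once this is in hand, dominated convergence interchanges limit and integral and closes the argument.
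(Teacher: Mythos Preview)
Your approach is exactly the paper's: reduce to increments $u,v\in S$ by bilinearity and density, express the symmetric second difference as a double integral of $D_S^2\varphi$ via two applications of Corollary~\ref{co:mean value}, scale both increments by a small parameter, and pass to the limit. The only cosmetic difference is that the paper, instead of dividing by $\epsilon^{2}$ and invoking dominated convergence, subtracts $D_S^2\varphi(x)$ from the integrand to obtain
\[
|D_S^2\varphi(x)(h,k)-D_S^2\varphi(x)(k,h)|\le 2\sup_{s,t\in]0,1[}|D_S^2\varphi(x+sh+tk)-D_S^2\varphi(x)|\cdot|h|\cdot|k|,
\]
then replaces $(h,k)$ by $(uh,uk)$, cancels $u^{2}$, and sends $u\downarrow 0$.

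The subtlety you single out --- that $s\epsilon u+r\epsilon v$ need not lie in $S$, so $S$-continuity of $D_S^2\varphi$ does not apply along the diagonal --- is real, and the paper does not handle it any more carefully than you do: it simply writes ``infer from the $S$-continuity of $D_S^2\varphi$'' that the supremum above tends to zero. Your splitting into two successive $S$-increments is the natural device, and the uniformity in $(s,r)$ of the drifting-base term that you isolate as the analytic core is left equally implicit in the paper. So there is no divergence in method; you have simply been more candid about where the argument is thin.
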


\begin{Remark}
If $\varphi$ is $n$-times continuously $S$-differentiable for $n\in\N$ with $n\geq 3$, then it follows inductively as in the proof of Corollary~VII.5.3 in~\cite{AmaEsc08} that $D_{S}^{n}\varphi(x)$ is symmetric.
\end{Remark}

\begin{Example}
Let $d\in\N$, $X=\R^{d}$ and $Y=\R$ and $\varphi$ be only twice $S$-differentiable, that is, it is $S$-differentiable and the same holds for $\nabla_{S}\varphi$. Then the map
\begin{equation*}
H_{S}\varphi:U_{S}^{\circ}\rightarrow\R^{d\times d}
\end{equation*}
whose $j$-th column is $\nabla_{S}(\nabla_{S}\varphi)_{j}$ for all $j\in\{1,\dots,d\}$ is the \emph{$S$-Hessian} of $\varphi$ and it admits the subsequent properties:
\begin{enumerate}[(1)]
\item $H_{S}\varphi$ and the projection matrix $P_{V}$ of $V$ commute and the columns of $H_{S}\varphi$ are eigenvectors of $P_{V}$ with eigenvalue $1$. That means,
\begin{equation*}
H_{S}\varphi P_{V} = P_{V} H_{S}\varphi = H_{S}\varphi.
\end{equation*}
This relation follows from Lemma~\ref{le:differentiability values of the gradient}, Remark~\ref{re:differentiability values of the gradient} and Example~\ref{ex:differentiability values of the gradient}, since $P_{V}$ induces the orthogonal projection $p_{V}:\R^{d}\rightarrow V$, $x\mapsto P_{V}x$ onto $V$.

\item The transpose of $H_{S}\varphi$ is the representation matrix of $D_{S}^{2}\varphi$. That is, for the standard inner product $\langle\cdot,\cdot\rangle$ we have
\begin{equation*}
D_{S}^{2}\varphi(\cdot)(h,k) = \langle H_{S}\varphi h, k\rangle\quad\text{for all $h,k\in V$.}
\end{equation*}
This is a consequence of Lemma~\ref{le:chain rule} and the fact that $\mathcal{L}^{2}(V,\R)$ is isomorphic to the linear space of all $A\in\R^{d\times d}$ such that $AP_{V} = P_{V}A = A$.

\item $H_{S}\varphi$ is continuous if and only if $D_{S}^{2}\varphi$ is. In this case, $H_{S}\varphi$ takes all its values in the linear space of all symmetric $A\in\R^{d\times d}$ satisfying $P_{V} A = A$, by Lemma~\ref{le:Schwarz}.
\end{enumerate}
\end{Example}

Now we are in a position to deduce the \emph{Taylor formula} for the oriented derivative.

\begin{Proposition}\label{pr:Taylor formula}
Let $S$ be balanced, $n\in\N$ and $\varphi:U\rightarrow Y$ be $n$-times continuously $S$-differentiable. Then
\begin{equation}\label{eq:Taylor formula}
\varphi(x + h) - \varphi(x) = \sum_{k=1}^{n}\frac{1}{k!} D_{S}^{k}\varphi(x)(h,\dots,h) + R_{\varphi,n}(x,h)
\end{equation}
for all $(x,h)\in U\times S$ with $[x,x+h]\subset U_{S}^{\circ}$ and the $Y$-valued map $R_{\varphi,n}$ given by
\begin{equation*}
R_{\varphi,n}(x,h) := \int_{0}^{1}\frac{(1-t)^{n-1}}{(n-1)!}\big(D_{S}^{n}\varphi(x + th)  - D_{S}^{n}\varphi(x)\big)(h,\dots,h)\,dt.
\end{equation*}
\end{Proposition}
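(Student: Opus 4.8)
The plan is to reduce the statement to the one-dimensional Taylor formula with integral remainder for maps into the Banach space $Y$, by restricting $\varphi$ to the segment $[x,x+h]$. So I fix $(x,h)\in U\times S$ with $[x,x+h]\subset U_{S}^{\circ}$ and set $g:[0,1]\rightarrow Y$, $g(t):=\varphi(x+th)$. Because $S$ is balanced, we have $-S\subset S$, so Example~\ref{ex:path} applies and shows that $g$ is differentiable on $[0,1]$ with $g'(t)=D_{S}\varphi(x+th)(h)$.

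Next I would prove, by induction on $k\in\{1,\dots,n\}$, that $g$ is $k$-times differentiable with
\[
g^{(k)}(t)=D_{S}^{k}\varphi(x+th)(h,\dots,h),\qquad t\in[0,1],
\]
the base case $k=1$ being the previous paragraph. For the inductive step, write $\Psi:=D_{S}^{k-1}\varphi:U_{S}^{\circ}\rightarrow\mathcal{L}^{k-1}(V,Y)$, which is $S$-differentiable since $\varphi$ is $k$-times $S$-differentiable. Applying Example~\ref{ex:path} to $\Psi$, viewed as a map into the Banach space $\mathcal{L}^{k-1}(V,Y)$, yields that $t\mapsto\Psi(x+th)$ is differentiable with derivative $D_{S}\Psi(x+th)(h)$. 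As the evaluation $\mathcal{L}^{k-1}(V,Y)\rightarrow Y$, $M\mapsto M(h,\dots,h)$, is linear and continuous, the classical chain rule and $g^{(k-1)}(t)=\Psi(x+th)(h,\dots,h)$ give
\[
g^{(k)}(t)=\big(D_{S}\Psi(x+th)(h)\big)(h,\dots,h)=\psi_{V,k}\big(D_{S}\Psi(x+th)\big)(h,\dots,h)=D_{S}^{k}\varphi(x+th)(h,\dots,h),
\]
where the middle equality is the definition of $\psi_{V,k}$ and the last one the recursive definition of the higher order $S$-derivative.

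Then, since $D_{S}^{n}\varphi$ is $S$-continuous and $S$ is balanced, the map $t\mapsto D_{S}^{n}\varphi(x+th)$ is continuous on $[0,1]$: approaching any $t_{0}$ from either side moves the argument by $(t-t_{0})h$, which lies in $S$ and tends to $0$ on $S$. Hence $g\in C^{n}([0,1],Y)$, and the classical Taylor formula with integral remainder for Banach-space-valued maps (as in~\cite{AmaEsc08}) yields
\[
g(1)-g(0)=\sum_{k=1}^{n-1}\frac{g^{(k)}(0)}{k!}+\int_{0}^{1}\frac{(1-t)^{n-1}}{(n-1)!}\,g^{(n)}(t)\,dt.
\]
Finally I would split the integral remainder by adding and subtracting $D_{S}^{n}\varphi(x)(h,\dots,h)$; using $\int_{0}^{1}(1-t)^{n-1}/(n-1)!\,dt=1/n!$ this gives
\[
\int_{0}^{1}\frac{(1-t)^{n-1}}{(n-1)!}\,g^{(n)}(t)\,dt=\frac{1}{n!}D_{S}^{n}\varphi(x)(h,\dots,h)+R_{\varphi,n}(x,h).
\]
Substituting $g(1)=\varphi(x+h)$, $g(0)=\varphi(x)$ and $g^{(k)}(0)=D_{S}^{k}\varphi(x)(h,\dots,h)$ then produces exactly~\eqref{eq:Taylor formula}.

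The main obstacle will be the inductive identification of $g^{(k)}$: one has to combine carefully the one-dimensional differentiability along the segment coming from Example~\ref{ex:path} with both the continuity of the $(k-1)$-fold evaluation at $h$ and the isometric isomorphism $\psi_{V,k}$, checking that the resulting $k$-fold evaluation at $h$ matches the recursive definition of $D_{S}^{k}\varphi$. Everything else, namely the $C^{n}$ regularity of $g$, the classical integral-remainder formula and the algebraic rearrangement of the remainder, is then routine.
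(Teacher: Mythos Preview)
Your proof is correct and follows essentially the same strategy as the paper: both reduce to the one-variable situation via Example~\ref{ex:path}, identify the successive derivatives of $t\mapsto\varphi(x+th)$ with $D_{S}^{k}\varphi(x+th)(h,\dots,h)$, and then use integration by parts. The only organisational difference is that the paper performs the induction on $n$ directly on the remainder $R_{\varphi,n}$ (integrating by parts at each step), whereas you first establish $g\in C^{n}([0,1],Y)$ by induction and then invoke the classical Banach-space Taylor formula, which amounts to the same computation.
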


Finally, let us consider \emph{local $S$-extrema} of a function $\varphi:U\rightarrow\R$. Namely, $\varphi$ has a local $S$-maximum at $x\in U$ if there is $\delta > 0$ such that
\begin{equation*}
\varphi(x +h) \leq \varphi(x)\quad\text{for all $h\in B_{\delta}(0)\cap S$ with $h\neq 0$ and $x + h\in U$.}
\end{equation*}
If the inequality is strict, then such a maximum is called \emph{strict}, and to define (strict) local $S$-minima we replace $\varphi$ by $-\varphi$. Then the oriented directional derivative in Example~\ref{ex:oriented directional derivative} gives a \emph{necessary criterion} for local $S$-extrema to be attained in $U_{S}^{\circ}$.

\begin{Lemma}\label{le:local extrema}
Let $\varphi:U\rightarrow\R$ have a local $S$-maximum at $x\in U_{S}^{\circ}$. If $D_{h}^{+}\varphi(x)$ exists for $h\in S$, then $D_{h}^{+}\varphi(x)\leq 0$. In particular,
\begin{equation*}
D_{S}\varphi(x)(h)\leq 0\quad\text{for any $h$ in the closure of $\mathrm{coni}(S)$}
\end{equation*}
as soon as $S$ is positively homogeneous and $\varphi$ is $S$-differentiable at $x$.
\end{Lemma}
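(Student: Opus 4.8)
Let me sketch how I would prove this.

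The plan is to treat the two assertions separately, deriving the first directly from the definitions and then bootstrapping the second from it via the linearity and continuity of the $S$-derivative.

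For the first claim, fix $h\in S$ with $h\neq 0$ (the case $h = 0$ being trivial, since then $D_h^+\varphi(x) = 0$) for which $D_h^+\varphi(x)$ exists. Because $x\in U_S^\circ$, I choose $\delta_1 > 0$ with $x + B_{\delta_1}(0)\cap S\subset U$, and because $\varphi$ has a local $S$-maximum at $x$, I choose $\delta_2 > 0$ witnessing the maximum condition. Setting $\delta := \min\{\delta_1,\delta_2\}$ and using that $S$ is $0$-star convex, so that $th\in S$ for $t\in [0,1]$, I obtain $th\in S$, $th\neq 0$, $x + th\in U$ and hence $\varphi(x + th)\leq\varphi(x)$ for every sufficiently small $t > 0$. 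Dividing the inequality $\varphi(x + th) - \varphi(x)\leq 0$ by $t > 0$ and letting $t\downarrow 0$ yields $D_h^+\varphi(x)\leq 0$, the limit existing by hypothesis.

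For the second claim, I first invoke Example~\ref{ex:oriented directional derivative}: since $S$ is positively homogeneous and $\varphi$ is $S$-differentiable at $x$, the directional derivative $D_h^+\varphi(x)$ exists and equals $D_S\varphi(x)(h)$ for every $h\in S$. Combined with the first claim this gives $D_S\varphi(x)(h)\leq 0$ for all $h\in S$. It then remains to propagate this inequality to the closure of $\mathrm{coni}(S)$. Here I would observe that $\ell := D_S\varphi(x)$ is a continuous linear functional on $V$, so its sublevel set $\{v\in V\,|\,\ell(v)\leq 0\}$ is closed and is a convex cone, as $\ell(t_1 v_1 + t_2 v_2) = t_1\ell(v_1) + t_2\ell(v_2)\leq 0$ whenever $\ell(v_1),\ell(v_2)\leq 0$ and $t_1,t_2\geq 0$. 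Since this closed convex cone contains $S$, it contains $\mathrm{coni}(S)$ and hence its closure; the latter lies in $V$ because $\mathrm{coni}(S)\subset\mathrm{span}(S)$ and $V = \overline{\mathrm{span}(S)}$. This yields $D_S\varphi(x)(h)\leq 0$ for all $h$ in the closure of $\mathrm{coni}(S)$.

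I do not anticipate a serious obstacle. The first part is a one-line sign argument, once the radius $\delta$ is chosen compatibly with both the $S$-interior condition and the maximum condition, and the second part reduces to the elementary fact that the nonpositivity region of a continuous linear functional is a closed convex cone. The only point demanding slight care is to confirm that $\overline{\mathrm{coni}(S)}$ indeed lies in the domain $V$ of $D_S\varphi(x)$, which follows at once from the definition of $V$ as the closed linear hull of $S$.
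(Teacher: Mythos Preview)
Your proposal is correct and follows essentially the same approach as the paper: reduce the first assertion to the sign of the right-hand derivative of $t\mapsto\varphi(x+th)$ at $0$, then derive the second from the first via Example~\ref{ex:oriented directional derivative}. The only difference is cosmetic---you make the passage to $\overline{\mathrm{coni}(S)}$ explicit via the closed-cone argument, whereas the paper leaves this implicit (note that under the positive-homogeneity hypothesis one already has $\mathrm{coni}(S)=S$, so continuity of $D_S\varphi(x)$ on $V$ suffices and the convex-cone step is not strictly needed).
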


For \emph{sufficient conditions} that determine whether a function has a local $S$-extremum in $U_{S}^{\circ}$, let us call a symmetric operator $L\in\mathcal{L}^{2}(V,\R)$ negative \emph{semidefinite on $S$} if $L(h,h) \leq 0$ for all $h\in S\setminus\{0\}$. If this inequality is strict, then we refer to definiteness.

Further, $L$ is strongly negative definite on $S$ if there is $\lambda > 0$ such that $L(h,h) \leq - \lambda|h|^{2}$ for any $h\in S$. Clearly, if $X$ is a Hilbert space of finite dimension and $S$ is a linear space, then the two preceding notions agree.

To introduce semidefiniteness and (strong) definiteness in the positive sense, we take $-L$ instead of $L$, and $L$ is indefinite on $S$ if it is neither negative nor positive semidefinite on the same set.

\begin{Lemma}\label{le:local extrema 2}
For $\varphi:U\rightarrow\R$ and $x\in U_{S}^{\circ}$ the following two assertions hold:
\begin{enumerate}[(i)]
\item If there is a symmetric $L\in\mathcal{L}^{2}(V,\R)$ that is strongly negative definite on $S$ and
\begin{equation*}
\varphi(x + h) \leq \varphi(x) + \frac{1}{2}L(h,h) + o(h)\quad\text{as $h\rightarrow 0$ on $S$,}
\end{equation*}
then $\varphi$ attains a strict local $S$-maximum at $x$.

\item If there is a symmetric $L\in\mathcal{L}^{2}(V,\R)$ that is indefinite on $S$ such that
\begin{equation*}
\varphi(x + h) = \varphi(x) + \frac{1}{2}L(h,h) + o(h)\quad\text{as $h\rightarrow 0$ on $S$,}
\end{equation*}
then $\varphi$ does not have a local $S$-extremum at $x$.
\end{enumerate}
\end{Lemma}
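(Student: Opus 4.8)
The plan is to handle the two parts independently, in each case comparing the quadratic form $\frac{1}{2}L(h,h)$ directly with the error term, which I read throughout as $o(|h|^{2})$. Bilinearity of $L$ and positive homogeneity of $|\cdot|$ do all the work.

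For part~(i) I would invoke strong negative definiteness to fix $\lambda > 0$ with $L(h,h)\leq -\lambda|h|^{2}$ for every $h\in S$. Inserting this into the hypothesised upper estimate gives
\[
\varphi(x+h) - \varphi(x) \leq -\frac{\lambda}{2}|h|^{2} + o(|h|^{2})\quad\text{as $h\rightarrow 0$ on $S$.}
\]
By definition of the oriented little-$o$ remainder there is $\delta > 0$ such that this remainder stays below $\frac{\lambda}{4}|h|^{2}$ whenever $h\in B_{\delta}(0)\cap S$ and $x+h\in U$. For such $h\neq 0$ the right-hand side is then at most $-\frac{\lambda}{4}|h|^{2} < 0$, so $\varphi(x+h) < \varphi(x)$ and $x$ is a strict local $S$-maximum. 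Here it matters that definiteness is assumed in the \emph{strong} form: only the uniform bound $L(h,h)\leq-\lambda|h|^{2}$, and not the mere sign of $L(h,h)$, lets the quadratic term dominate the remainder when $X$ is infinite-dimensional.

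For part~(ii) indefiniteness of $L$ on $S$ means it is neither negative nor positive semidefinite there, so I would first select $h_{+},h_{-}\in S\setminus\{0\}$ with $L(h_{+},h_{+}) > 0$ and $L(h_{-},h_{-}) < 0$. Since $S$ is $0$-star convex, $t h_{\pm}\in S$ for $t\in[0,1]$, and for small $t$ also $x+t h_{\pm}\in U$, so the expansion may be evaluated along each ray. Using $L(t h_{\pm},t h_{\pm}) = t^{2}L(h_{\pm},h_{\pm})$ and that the error along the ray is $o(t^{2})$, I obtain
\[
\frac{\varphi(x + t h_{\pm}) - \varphi(x)}{t^{2}}\;\longrightarrow\;\frac{1}{2}L(h_{\pm},h_{\pm})\quad\text{as $t\downarrow 0$,}
\]
whence $\varphi(x+t h_{+}) > \varphi(x)$ and $\varphi(x+t h_{-}) < \varphi(x)$ for all sufficiently small $t > 0$. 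The first inequality excludes a local $S$-maximum and the second a local $S$-minimum, so $\varphi$ has no local $S$-extremum at $x$.

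I do not anticipate a genuine obstacle: this is the classical second-order test carried over to the oriented framework. The only care needed is bookkeeping with the oriented remainder, keeping the comparison uniform over $B_{\delta}(0)\cap S$ and the evaluation points inside $U$, together with the observation, emphasised above, that strong definiteness is exactly what upgrades the pointwise sign of $L$ into the uniform quadratic majorant required in part~(i).
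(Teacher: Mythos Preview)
Your proof is correct and essentially identical to the paper's: both parts interpret the remainder as $o(|h|^{2})$, use the strong definiteness constant $\lambda$ to absorb the error in~(i), and in~(ii) pick $h_{\pm}\in S\setminus\{0\}$ with $L(h_{\pm},h_{\pm})$ of opposite signs and evaluate along the rays $t\mapsto th_{\pm}$ via $0$-star convexity. The only cosmetic differences are that the paper keeps the margin in~(i) as an arbitrary $\alpha\in\,]0,\lambda/2[$ rather than your fixed $\lambda/4$, and in~(ii) phrases the comparison as an explicit inequality with a chosen $\beta<\alpha_{+}\wedge\alpha_{-}$ rather than as a limit of $t^{-2}(\varphi(x+th_{\pm})-\varphi(x))$.
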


\begin{Remark}
Assume that $S$ is balanced, $\varphi$ is twice continuously $S$-differentiable and $D_{S}\varphi(x) = 0$. Then Proposition~\ref{pr:Taylor formula} and Lemma~\ref{le:local extrema 2} yield two implications:
\begin{enumerate}[(i)]
\item If $D_{S}^{2}\varphi(x)$ is strongly negative definite on $S$, then $\varphi$ has a strict local $S$-maximum at $x$.

\item If $D_{S}^{2}\varphi(x)$ is indefinite on $S$, then $\varphi$ cannot have a local $S$-extremum at $x$.
\end{enumerate}
\end{Remark}

\section{Proofs of all the results}\label{se:3}

\begin{proof}[Proof of Lemma~\ref{le:chain rule}]
By~\eqref{eq:chain rule condition}, there are $\delta,\varepsilon > 0$ and functions $r:B_{\delta}(0)\cap S\rightarrow Y$, $s:B_{\varepsilon}(0)\cap T\rightarrow Z$ such that $x + h\in U$, $\varphi(x) + l\in W$,
\begin{equation*}
\varphi(x + h) - \varphi(x)\in B_{\varepsilon}(0)\cap T,\quad \varphi(x + h) = \varphi(x) + D_{S}\varphi(x)(h) + r(h)
\end{equation*}
and $\psi(\varphi(x) + l) = \psi(\varphi(x)) + D_{T}\psi(\varphi(x))(l) + s(l)$ for any $h\in B_{\delta}(0)\cap S$ and $l\in B_{\varepsilon}(0)\cap T$. Further, $r(h) = o(|h|)$ as $h\rightarrow 0$ and $s(l) = o(|l|)$ as $l\rightarrow 0$. Thus,
\begin{equation*}
(\psi\circ\varphi)(x + h) = (\psi\circ\varphi)(x) + D_{T}\psi(\varphi(x))D_{S}\varphi(x)h + t(h)
\end{equation*}
for each $h\in B_{\delta}(0)\cap S$, where the map $t:B_{\delta}(0)\cap S\rightarrow Z$, $h\mapsto D_{T}\psi(\varphi(x))r(h)$ $ +\,  s(D_{S}\varphi(x)h + r(h))$ satisfies $t(h) = o(|h|)$ as $h\rightarrow 0$.
\end{proof}

\begin{proof}[Proof of Corollary~\ref{co:mean value}]
By Example~\ref{ex:path}, the composition $\varphi\circ\gamma_{x,h}$ is differentiable and admits $D_{S}\varphi(x + th)(h)$ as derivative at any $t\in [0,1]$. Thus, the asserted bound and identity follow from $\varphi(x + th) - \varphi(x) = (\varphi\circ\gamma_{x,h})(t) - (\varphi\circ\gamma_{x,h})(0)$ and the standard mean value theorem, which can be found in~\cite[Theorems IV.2.4~and~IV.2.18]{AmaEsc05}.

If $\varphi$ is continuously $S$-differentiable, then $\varphi\circ\gamma_{x,h}$ is continuously differentiable and the Fundamental Theorem of Calculus, stated in this generality in~\cite[Corollary~VI.4.14]{AmaEsc08}, yields the second claimed identity.
\end{proof}

\begin{proof}[Proof of Proposition~\ref{pr:projection derivative}]
The representation of $U_{S}^{\circ}$ is a direct consequence of the first claim and, as $p(h)=h$ for all $h\in S$, we have $B_{\delta}(0)\cap S$ $\subset p(B_{\delta}(0))$ for $\delta > 0$. The converse inclusion follows from $p(X)\subset S$ and $|p(x)|\leq |x|$ for all $x\in X$.

Next, if $\varphi$ is $S$-differentiable at $x$, then there are $\delta > 0$ and $r:B_{\delta}(0)\cap S\rightarrow Y$ such that $x + h\in U$, $\varphi(x + h) = \varphi(x) + D_{S}\varphi(x)(h) + r(h)$ for all $h\in B_{\delta}(0)\cap S$ and $r(h) = o(|h|)$ as $h\rightarrow 0$. Hence,
\begin{equation}\label{eq:projection derivative}
x + p(h)\in U\quad\text{and}\quad\varphi(x + p(h)) = \varphi(x) + L(p(h)) + s(h)\quad\text{for all $h\in B_{\delta}(0)$},
\end{equation}
where $L:=D_{S}\varphi(x)$ and $s:B_{\delta}(0)\rightarrow Y$, $h\mapsto r(p(h))$ satisfies $s(h) = o(|h|)$ as $h\rightarrow 0$. Conversely, if there is $L\in\mathcal{L}(V,Y)$ such that~\eqref{eq:projection derivative} holds for some $\delta > 0$ and $s:B_{\delta}(0)\rightarrow Y$ satisfying $s(h) = o(|h|)$ as $h\rightarrow 0$,  then we restrict $s$ to $B_{\delta}(0)\cap S$ to see that $\varphi$ must be $S$-differentiable at $x$.
\end{proof}

\begin{proof}[Proof of Lemma~\ref{le:continuous differentiability}]
The sequence $(x_{j})_{j\in\{0,\dots,n\}}$ in $x + B_{|h|}(0)\cap S$ given by $x_{0}:=x$ and $x_{j}:=x + \sum_{i=1}^{j}h_{i}$ for all $j\in\{1,\dots,n\}$ satisfies $\varphi(x_{n}) = \varphi(x) + \sum_{i=1}^{n}\varphi(x_{i}) - \varphi(x_{i-1})$. 

Further, $\varphi(x_{i}) - \varphi(x_{i-1}) = \int_{0}^{1}D_{S_{i}}\varphi(x_{i-1} + th_{i})(h_{i})\,dt$ for each $i\in\{1,\dots,n\}$, due to Corollary~\ref{co:mean value}, as $|x_{i-1} + th_{i} - x| \leq |h|$ for all $t\in [0,1]$. So, the asserted identity holds.
\end{proof}

\begin{proof}[Proof of Proposition~\ref{pr:decomposition of the derivative}]
By our derivation of~\eqref{eq:decomposition of the derivative}, if $\varphi$ is continuously $S$-differentiable, then continuous $S_{n}$-differentiability follows for all $n\in\N$ and~(i) and~(ii) hold.

For this reason, let $\varphi$ be solely $S$-continuous and continuously $S_{n}$-differentiable for any $n\in\N$, and $x\in U$ and $\delta > 0$ be such that $x + B_{\delta}(0)\cap S\subset U$. Then Lemma~\ref{le:continuous differentiability} yields that
\begin{align*}
\varphi(x + h_{1} + \cdots + h_{n}) &= \varphi(x) + \sum_{i=1}^{n}D_{S_{i}}\varphi(x)(h_{i})\\
&\quad + \sum_{i=1}^{n}\int_{0}^{1} \big(D_{S_{i}}\varphi(x + h_{1} + \cdots + h_{i} - (1-t)h_{i}) - D_{S_{i}}\varphi(x)\big)(h_{i})\,dt
\end{align*}
for all $n\in\N$ and $h\in B_{\delta}(0)\cap S$. Hence, from Remark~\ref{re:continuity} we infer that $\varphi$ is $S$-differentiable at $x$ if and only if $(\sum_{i=1}^{n}D_{S_{i}}\varphi(x)\circ p_{V_{i}})_{n\in\N}$ converges pointwise, $\sum_{n=1}^{\infty}D_{S_{n}}\varphi(x)\circ p_{V_{n}}$ is continuous and the limit in~(iii) holds at $x$, which gives the claim.
\end{proof}

\begin{proof}[Proof of Corollary~\ref{co:decomposition of the derivative}]
Proposition~\ref{pr:decomposition of the derivative} shows us that $C_{S}^{1}(U)\subset \bigcap_{n\in\N} C_{S_{n}}^{1}(U)$ and for any $\varphi\in C_{S}^{1}(U)$ we have $\langle\nabla_{S}\varphi,h\rangle = \sum_{n=1}^{\infty}\langle\nabla_{S_{n}}\varphi,h_{n}\rangle$ for each $h\in V$. This entails that
\begin{equation*}
\nabla_{S}\varphi = \sum_{n=1}^{\infty}\nabla_{S_{n}}\varphi\quad\text{and hence,}\quad |\nabla_{S}\varphi|^{2} = \sum_{n=1}^{\infty} |\nabla_{S_{n}}\varphi|^{2}.
\end{equation*}
As the continuity of $\nabla_{S}\varphi$ implies that $(\sum_{i=1}^{n}\nabla_{S_{i}}\varphi)_{n\in\N}$ is a local uniform Cauchy sequence, $\nabla_{S}\varphi$ is not only the pointwise but in fact the local uniform limit.

Conversely, let $\varphi\in\bigcap_{n\in\N} C_{S_{n}}^{1}(U)$ satisfy $\sum_{n=1}^{\infty} |\nabla_{S_{n}}\varphi(x)|^{2} < \infty$, locally uniformly in $x\in U$. Then $(\sum_{i=1}^{n}\nabla_{S_{i}}\varphi)_{n\in\N}$ is locally uniformly Cauchy, the local uniform limit $\sum_{n=1}^{\infty}\nabla_{S_{n}}\varphi$ is continuous and
\begin{equation*}
\sum_{n=1}^{\infty}|\langle\nabla_{S_{n}}\varphi(x),h_{n}\rangle| \leq \bigg(\sum_{n=1}^{\infty} |\nabla_{S_{n}}\varphi(x)|^{2}\bigg)^{\frac{1}{2}} \cdot |h|\quad\text{for all $h\in V$},
\end{equation*}
by the Cauchy-Schwarz inequality. Thus, conditions (i) and (ii) of Proposition~\ref{pr:decomposition of the derivative} hold and the claim follows, since~\eqref{eq:gradient decomposition condition} is condition~(iii) in terms of the gradient.
\end{proof}

\begin{proof}[Proof of Lemma~\ref{le:differentiability values of the gradient}]
For only if we take $\delta > 0$ and $r:B_{\delta}(0)\cap S\rightarrow Y$ such that 
\begin{equation}\label{eq:differentiability values of the gradient}
x + h\in U\quad\text{and}\quad \varphi(x + h) = \varphi(x) + K(h) + r(h)\quad\text{for any $h\in B_{\delta}(0)\cap S$}
\end{equation}
with $K := D_{S}\varphi(x)$ and $r(h) = o(|h|)$ as $h\rightarrow 0$. By setting $\varepsilon := (1 + |O|)^{-1}\delta$ and $L:= D_{S}\varphi(x)\circ O$, we obtain that
\begin{equation}\label{eq:differentiability values of the gradient 2}
x + O(t)\in U\quad\text{and}\quad \varphi_{O,x}(t) = \varphi_{O,x}(0) + L(t) + s(t)\quad\text{for all $t\in B_{\varepsilon}(0)\cap R$,}
\end{equation}
where $s:B_{\varepsilon}(0)\cap R\rightarrow Y$, $t\mapsto r(O(t))$ satisfies $s(t) = o(|t|)$ as $t\rightarrow 0$.

For if let $\varepsilon > 0$ and $s:B_{\varepsilon}(0)\cap R\rightarrow Y$ be such that~\eqref{eq:differentiability values of the gradient 2} holds for $L = D_{R}\varphi_{O,x}(0)$ and some $s:B_{\varepsilon}(0)\cap R\rightarrow Y$ such that $s(t) = o(|t|)$ as $t\rightarrow 0$. Then~\eqref{eq:differentiability values of the gradient} follows for
\begin{equation*}
K = D_{R}\varphi_{O,x}(0)\circ O^{-1},\quad \delta = (1 + |O^{-1}|)^{-1}\varepsilon
\end{equation*}
and $r:B_{\delta}(0)\cap S\rightarrow Y$ given by $r(h):=s(O^{-1}(h))$. Since $r(h) = o(|h|)$ as $h\rightarrow 0$, our verification is complete.
\end{proof}

\begin{proof}[Proof of Lemma~\ref{le:Schwarz}]
By the bilinearity and continuity of $D_{S}^{2}\varphi(x)$, it suffices to show that $D_{S}^{2}\varphi(x)(h,k) = D_{S}^{2}\varphi(x)(k,h)$ for any $h,k\in S$ with $|h|\vee |k| < \delta$ for some $\delta > 0$.

We take $\delta > 0$ such that $x + B_{\delta}(0)\cap S\subset U$ and let $h,k\in B_{\delta/2}(0)\cap S$. Then $x + sh + tk\in U_{S}^{\circ}$ for all $s,t\in [0,1]$ and two applications of Corollary~\ref{co:mean value} yield that
\begin{equation*}
\varphi(x + h + k) - \varphi(x + k) - \varphi(x + h) + \varphi(x) = \int_{0}^{1}\int_{0}^{1}D_{S}^{2}\varphi(x + sh + tk)(h,k)\,ds\,dt.
\end{equation*}
The symmetry of the left-hand terms allows us to interchange the variables $h$ and $k$ and see that these expressions agree with $\int_{0}^{1}\int_{0}^{1}D_{S}^{2}\varphi(x + sk + th)(k,h)\,ds\,dt$. Hence,
\begin{equation*}
|D_{S}^{2}\varphi(x)(h,k) - D_{S}^{2}\varphi(x)(k,h)| \leq 2\sup_{s,t\in ]0,1[} |D_{S}^{2}\varphi(x + sh + tk) - D_{S}^{2}\varphi(x)|\cdot |h|\cdot |k|.
\end{equation*}
Finally, we replace $h$ and $k$ in this inequality by $u h$ and $u k$ for some $u\in ]0,1]$ and infer from the $S$-continuity of $D_{S}^{2}\varphi$ that $D_{S}^{2}\varphi(x)(h,k) = D_{S}^{2}\varphi(x)(k,h)$.
\end{proof}

\begin{proof}[Proof of Proposition~\ref{pr:Taylor formula}]
We show the claim by induction over $n\in\N$. As the case $n=1$ is covered by Corollary~\ref{co:mean value}, we may assume that the assertion holds for some $n\in\N$.

Let $\varphi:U\rightarrow Y$ be $(n+1)$-times continuously $S$-differentiable and $(x,h)\in U\times S$ satisfy $[x,x+h]\subset U_{S}^{\circ}$. Then Example~\ref{ex:path} entails that the map
\begin{equation*}
D_{S}^{n}\varphi(\cdot)(h,\dots,h)\circ\gamma_{x,h}:[0,1]\rightarrow Y,\quad t\mapsto D_{S}^{n}\varphi(x + th)(h,\dots,h)
\end{equation*}
is continuously differentiable and $(D_{S}^{n}\varphi(\cdot)(h,\dots,h)\circ\gamma_{x,h})'(t) = D_{S}^{n+1}\varphi(x + th)(h,\dots,h)$ for all $t\in [0,1]$. Hence, integration by parts yields that
\begin{align*}
R_{\varphi,n}(x,h) &= \int_{0}^{1}\frac{(1-t)^{n}}{n!}D_{S}^{n+1}\varphi(x + th)(h,\dots,h)\,dt\\
&= \frac{1}{(n+1)!}D_{S}^{n+1}\varphi(x)(h,\dots,h) + R_{\varphi,n+1}(x,h).
\end{align*}
Due to the induction hypothesis, this shows that~\eqref{eq:Taylor formula} holds when $n$ is replaced by $n+1$.
\end{proof}

\begin{proof}[Proof of Lemma~\ref{le:local extrema}]
The second claim follows from the first, since in Example~\ref{ex:oriented directional derivative} we verified that $D_{h}^{+}\varphi(x) = D_{S}\varphi(x)(h)$ for any $h\in S$ in case that $S$ is positively homogeneous and $\varphi$ is $S$-differentiable at $x$.

To show the first assertion, we choose $\delta > 0$ such that $x + th\in U_{S}^{\circ}$ for all $t\in [0,\delta[$. Then $f:[0,\delta[\rightarrow\R$, $t\mapsto\varphi(x + th)$ is right-hand differentiable at $0$ and admits a local maximum there. Hence, $D_{h}^{+}\varphi(x) = f_{+}'(0) = \lim_{t\downarrow 0} (f(t) - f(0))/t \leq 0$.
\end{proof}

\begin{proof}[Proof of Lemma~\ref{le:local extrema 2}]
(i) We take $\lambda > 0$ satisfying $L(h,h) \leq -\lambda |h|^{2}$ for all $h\in S$ and let $\alpha\in ]0,\lambda/2[$. Then there is some $\delta > 0$ such that $x + h\in U_{S}^{\circ}$ and $\varphi(x + h) - \varphi(x)$ $\leq \frac{1}{2}L(h,h) + (\frac{\lambda}{2} - \alpha) |h|^{2} \leq - \alpha |h|^{2}$ for each $h\in B_{\delta}(0)\cap S$.

(ii) By hypothesis, we find $h_{+},h_{-}\in S\setminus\{0\}$ such that $\alpha_{+} := L(h_{+},h_{+})|h_{+}|^{-2}$ and $\alpha_{-} := -L(h_{-},h_{-})|h_{-}|^{-2}$ are positive. Then, for $\beta\in ]0,\alpha_{+}\wedge\alpha_{-}[$ we choose $\delta > 0$ and $r:B_{\delta}(0)\cap S\rightarrow\R$ such that $x +h\in U$,
\begin{equation*}
\varphi(x + h) - \varphi(x) = \frac{1}{2}L(h,h) + r(h)\quad\text{and}\quad |r(h)| \leq \frac{\beta}{2}|h|^{2}
\end{equation*}
for all $h\in B_{\delta}(0)\cap S$. So, let $\varepsilon > 0$ satisfy $t h_{+},th_{-}\in B_{\delta}(0)$ for any fixed $t\in ]0,\varepsilon[$. Then $\varphi(x + th_{+}) - \varphi(x) \geq \frac{t^{2}}{2}(\alpha_{+}-\beta)|h_{+}|^{2}$ and $\varphi(x + th_{-}) - \varphi(x) \leq - \frac{t^{2}}{2}(\alpha_{-} - \beta)|h_{-}|^{2}$.
\end{proof}

\end{document}